\newcommand{\bb}[1]{{\mathbb #1}}
\newtheorem{theorem}{Theorem} 
\newtheorem{corollary}[theorem]{Corollary}
\newtheorem{lemma}[theorem]{Lemma}
\newtheorem{proposition}[theorem]{Proposition}
\newtheorem{remark}[theorem]{Remark}
\numberwithin{equation}{section}
\numberwithin{theorem}{section}
\begin{document}

\title{Existence and uniqueness of solutions for Bertrand and Cournot mean field games}
\author{P. Jameson Graber}
\thanks{Both authors are grateful to be supported in this work by the National Science Foundation under NSF Grant DMS-1303775.}
\address{International Center for Decision and Risk Analysis\\
Naveen Jindal School of Management\\
The University of Texas at Dallas\\
800 West Campbell Rd, SM30\\
Richardson, TX 75080-3021\\
Phone: (972) 883-6249}
\email{pjg140130@utdallas.edu}

\author{Alain Bensoussan}
\thanks{This research is also supported by the Research Grants Council of HKSAR (CityU 500113).}
\address{International Center for Decision and Risk Analysis\\
Naveen Jindal School of Management\\
The University of Texas at Dallas\\
800 West Campbell Rd, SM30\\
Richardson, TX 75080-3021\\
Phone: (972) 883-6117}
\address{Department of Systems Engineering and Engineering Management\\
 City University of Hong Kong\\
 83 Tat Chee Avenue\\
 Kowloon Tong, Hong Kong}
\email{axb046100@utdallas.edu}

\dedicatory{Version: \today}

\maketitle

\begin{abstract}
We study a system of partial differential equations used to describe Bertrand and Cournot competition among a continuum of producers of an exhaustible resource.
By deriving new a priori estimates, we prove the existence of classical solutions under general assumptions on the data.
Moreover, under an additional hypothesis we prove uniqueness.

\emph{Keywords: mean field games, Hamilton-Jacobi, Fokker-Planck, coupled systems, optimal control, nonlinear partial differential equations}

MSC: 35K61
\end{abstract}

\section{Introduction} \label{sec:intro}

Our purpose is to study the following system of partial differential equations:
\begin{equation}
\label{main system}
\left\{
\begin{array}{rcc}
(i) & u_t + \frac{1}{2}\sigma^2 u_{xx} - ru + H(t,u_x,[mu_x]) = 0, & 0 < t < T, \ 0 < x < L\\
(ii) & m_t - \frac{1}{2}\sigma^2 m_{xx} - \left(G(t,u_x,[mu_x])m\right)_x = 0, & 0 < t < T, \ 0 < x < L\\
(iii) & m(0,x) = m_0(x), u(T,x) = u_T(x), & 0 \leq x \leq L\\
(iv) & u(t,0) = m(t,0) = 0, ~~ u_x(t,L) = 0, & 0 \leq t \leq T \\ 
(v) & \frac{1}{2}\sigma^2 m_x(t,L) + G(t,u_x(t,L),[mu_x])m(t,L) = 0, & 0 \leq t \leq T
\end{array}\right.
\end{equation}
where $T > 0$ and $L > 0$ are given constants, $m_0$ and $u_T$ are known smooth functions, and $H$ and $G$ are defined below in Section \ref{sec:specification}.
We mention for now that $H$ and $G$ depend on the variable $mu_x$ in a {\em nonlocal} way, in particular they are functions of
$$
\int_0^L u_x(t,x)m(t,x)dx.
$$
System \eqref{main system} was introduced by Chan and Sircar in \cite{chan2014bertrand} to represent a mean field game in which producers compete to sell an exhaustible resource.
Here we view the producers as a continuum of rational agents whose ``density" is given by the function $m(t,x)$ governed by a Fokker-Planck equation.
Each of them must solve an optimal control problem corresponding to the Hamilton-Jacobi-Bellman equation \eqref{main system}(i).
Further details will be given below in Section \ref{sec:specification}.

Mean field games were introduced in \cite{lasry07,huang2006large} to describe differential games with large numbers of players represented by a continuum.
Most recent results deal with models of the form
\begin{equation*}
\begin{array}{c}
u_t + \frac{1}{2}\sigma^2 u_{xx} - ru + H(t,x,u_x) = V[m]\\
m_t - \frac{1}{2}\sigma^2 m_{xx} - \left(G(t,x,u_x)m\right)_x = 0
\end{array}
\end{equation*}
where $V[m]$ is a monotone function.
There have been a number of existence and uniqueness theorems proved when $V[m]$ depends on $m$ both locally \cite{porretta2013weak,cardaliaguet2014second,cardaliaguet2013weak,cardaliaguet2014mean,cardaliaguet2012long,gomes2013time,gomes2014time} and non-locally \cite{cardaliaguet2013long2}.
More recently some progress has been made toward similar results for cases where the Hamiltonian depends on $m$ in a nonlinear way \cite{gomes2014existence,gomes2015short,graber2015weak}.
However, none of these results address systems where the coupling happens in the nonlocal part of the Fokker-Planck equation.

Applications of mean field games to economics have attracted much recent interest; see \cite{achdou2014partial,burger2014partial,gomes2014socio} for surveys of the topic.
The model we study here, which comes from \cite{chan2014bertrand}, describes Bertrand or Cournot economic competition in the mean field limit (i.e.~for a continuum of producers/consumers).
It resembles the model proposed by Gu\'eant, Lasry, and Lions to model oil production \cite{gueant11}, and it appears in a more complex and highly nonlinear form in \cite{chan2015fracking} to describe the response of traditional oil producers to new technological developments such as renewable energy and ``fracking".
In \cite{chan2015fracking} the authors point out, ``There does not exist anything like general existence and
uniqueness theorems for PDE systems of this kind."
This statement has inspired the present work, in which we prove existence and uniqueness for \eqref{main system}.

We would like to mention in this context a recent result of Burger, et al.~ \cite{burger2015boltzmann} which provides an existence and uniqueness theorem for a mean field games model of knowledge growth introduced by Lucas and Moll \cite{lucas11}.
Their model also involves a coupled Boltzmann/Hamilton-Jacobi system of equations in which the coupling occurs through an integral over the space variable.
This structure is natural for applications to economics, since aggregate quantities such as market price or total production are expressed mathematically as averages with respect to the density of agents.
For this reason it is desirable to develop techniques to analyze PDE systems of this type.

In this article we prove that under general conditions there exists a classical solution to \eqref{main system}, which under a certain restriction is also unique.
By ``classical solution" we mean that the equations in \eqref{main system} hold pointwise.
We consider only the case where $\sigma > 0$ so that the equations are of parabolic type.
Existence is obtained by applying the Leray-Schauder fixed point theorem; accordingly, the main effort of this paper is to provide {\em a priori estimates} of solutions.
A new feature of our analysis is the estimation of the nonlocal term $\int_0^L u_x(t,x)m(t,x)$.
Although traditional methods provide estimates of $u$ in $L^\infty$, it is not immediately clear how to obtain similar estimates for the gradient $u_x$.
In Section \ref{sec:nonlocal term} we exploit the structure of \eqref{main system} by directly computing the time derivative of the nonlocal term, careful analysis of which allows us to derive higher order regularity.

The remainder of this paper is organized as follows.
In the rest of the introduction we give definitions of the functions $H$ and $G$ from \eqref{main system}, introduce some notation and give our main assumptions on the data.
Section \ref{sec: a priori estimate} is devoted to a priori estimates and constitutes the core of this paper.
In Section \ref{sec:existence} we prove the existence of solutions.
Finally, in Section \ref{sec:uniqueness} we prove uniqueness under an additional hypothesis.

\subsection{Specification and explanation of the model} \label{sec:specification}

We summarize the interpretation of \eqref{main system} as follows.
Let $t$ be time and $x$ be the producer's capacity.
We assume there is a large set of producers and represent it as a continuum.
We say $m(t,x)$ is the ``density" of producers at time $t$, so that
\begin{equation}
\label{average}
\eta(t) := \int_0^L m(t,x)dx, ~~ 0 \leq \eta(t) \leq 1
\end{equation}
represents the total mass of producers remaining with  positive stock.
Note that $\eta(t)$ is a decreasing function in time.

The first equation in \eqref{main system} is the Hamilton-Jacobi-Bellman (HJB) equation for the maximization of profit.
Each producer's capacity is driven by a stochastic differential equation
\begin{equation}
dX(s) = -q(s,X(s))ds + \sigma {\bf 1}_{X(s) > 0} dW(s),
\end{equation}
where $q$ is determined by the price $p$ through a linear demand schedule
\begin{equation}
\label{demand function}
q = D^{(\eta)}(p,\bar{p}) = a(\eta) - p + c(\eta)\bar{p}, ~~ \eta > 0.
\end{equation}
In \eqref{demand function} $\bar{p}$ represents the market price, that is, the average price offered by all producers.
This is given by
\begin{equation} \label{def average price}
\bar{p}(t) = \frac{1}{\eta(t)}\int_0^L p^*(t,x)m(t,x)dx,
\end{equation}
where $p^*(t,x)$ is the Nash equilibrium price.
The coefficients $a(\eta)$ and $c(\eta) = 1 - a(\eta)$ are defined by
\begin{equation}
a(\eta) := \frac{1}{1+\epsilon \eta}, ~~ c(\eta) := \frac{\epsilon \eta}{1+\epsilon \eta}
\end{equation} 
for a given fixed competition parameter $\epsilon \geq 0$.
The case $\epsilon = 0$ corresponds to monopoly, while perfect competition is given by $\epsilon = +\infty$.
Thus each producer competes with all the others by responding to the market price.

We define the value function
\begin{equation} \label{optimal control problem}
u(t,x) := \sup_{p} \bb{E} \left\{
\int_t^T e^{-r(s-t)}p(s)q(s) ds + u_T(X(T)) \ | \  X(t) = x \right\}
\end{equation}
where $q(s)$ is given in terms of $p(s)$ by \eqref{demand function}.
The optimization problem \eqref{optimal control problem} has the corresponding Hamilton-Jacobi-Bellman equation
\begin{equation}
\label{bellman}
u_t + \frac{1}{2}\sigma^2 u_{xx} - ru + \max_{p } \left[\left(a(\eta(t)) - p + c(\eta(t))\bar{p}(t)\right)(p-u_x)\right] = 0.
\end{equation}
The optimal $p^*(t,x)$ satisfies the first order condition
\begin{equation} \label{def equilibrium price}
p^*(t,x) = \frac{1}{2}\left(a(\eta(t)) + c(\eta(t))\bar{p}(t) + u_x(t,x) \right),
\end{equation}
and we take $q^*(t,x)$ to be the corresponding demand
\begin{equation}
q^*(t,x) = \frac{1}{2}\left(a(\eta(t)) + c(\eta(t))\bar{p}(t) - u_x(t,x) \right).
\end{equation}
This leads to Equation \eqref{main system}(i) where
\begin{equation}
\label{hamilton-jacobi}
 H(t,u_x,[mu_x]) := q^*(t,x)^2 = \frac{1}{4}\left(a(\eta(t)) + c(\eta(t))\bar{p}(t) - u_x\right)^2
\end{equation}
On the other hand, the density of producers is driven by the Fokker-Planck equation \eqref{main system}(ii), where
\begin{equation}
\label{boltzmann}
G(t,u_x,[mu_x]) := q^*(t,x) = \frac{1}{2}\left(a(\eta(t)) + c(\eta(t))\bar{p}(t) - u_x\right)
\end{equation}
The coupling takes place through the average price function, which, thanks to \eqref{def average price} and \eqref{def equilibrium price}, is given by
\begin{equation}
\label{average price}
\bar{p}(t) = \frac{1}{2-c(\eta(t))}\left(a(\eta(t)) + \frac{1}{\eta(t)}\int_0^L u_x(t,x)m(t,x)dx\right)
\end{equation}
We have taken Dirichlet boundary conditions at $x = 0$ as in \cite{chan2014bertrand}.
On the other hand, rather than taking $L = +\infty$ and working on an unbounded domain, we have taken Neumann boundary conditions at $x = L$, which represents a diffusion which is reflected at this boundary point.
We can think of $L$ as an upper limit on the capacity of any given producer.

\subsection{Notation and assumptions}

Throughout this article we define $Q_T := (0,T) \times (0,L)$ to be the domain, $S_T := ([0,T] \times \{0,L\}) \cup (\{T\} \times [0,L])$ to be the parabolic boundary, and at times $\Gamma_T := ([0,T] \times \{0\}) \cup (\{T\} \times [0,L])$ to be the parabolic half-boundary.
For any domain $X$ in $\bb{R}$ or $\bb{R}^2$ we define $L^p(X)$, $p \in [1,+\infty]$ to be the Lebesgue space of $p$-integrable functions on $X$; $C^0(X)$ to be the space of all continuous functions on $X$; $C^{\alpha}(X)$, $0 < \alpha < 1$ to be the space of all H\"older continuous functions with exponent $\alpha$ on $X$; and $C^{n+\alpha}(X)$ to be the set of all functions whose $n$ derivatives are all in $C^\alpha(X)$.
For a subset $X \subset \overline{Q_T}$ we also define $C^{1,2}(X)$ to be the set of all functions on $X$ which are locally continuously differentiable in $t$ and twice locally continuously differentiable in $x$.
By $C^{\alpha/2,\alpha}(X)$ we denote the set of all functions which are locally H\"older continuous in time with exponent $\alpha/2$ and in space with exponent $\alpha$.

We will denote by $C$ a {\em generic} constant, which depends only on the data (namely $u_T,m_0,L,T,\sigma,r$ and $\epsilon$).
Its precise value may change from line to line.

Throughout we take the following assumptions on the data:
\begin{enumerate}
\item $u_T(x)$ and $m_0(x)$ are functions in $C^{2+\gamma}([0,L])$ for some $\gamma > 0$.
\item $u_T$ and $m_0$ satisfy compatible boundary conditions: $u_T(0) = u_T'(L) = 0$ and $m_0(0) = m_0(L) = m_0'(L) = 0$.
\item $m_0 \geq 0$ and $\int_0^L m_0(x)dx = 1$, i.e.~ $m_0$ is a probability density.
\item $u_T \geq 0$ and $u_T' \geq 0$, i.e.~$u_T$ is non-negative and non-decreasing.
\end{enumerate}
\begin{remark}
Of all the assumptions, the stipulation that $u_T$ be non-negative and non-decreasing seems the least essential; it is not necessary for most estimates.
However, it appears to be needed to prove the a priori bounds of Section \ref{sec:nonlocal term}.
\end{remark}

\section{A priori estimates} \label{sec: a priori estimate}

The goal of this section is to estimate various norms of solutions to \eqref{main system} using constants depending only on the data.
In Section \ref{sec:basic a priori} we prove some standard results, including the usual ``energy" type estimate on the quantity $\int_0^T \int_0^L u_x^2 m \ dx dt$.
Then in Section \ref{sec:hamilton-jacobi} we prove a priori bounds on the solution to the Hamilton-Jacobi equation using classical techniques for parabolic equations.
Section \ref{sec:nonlocal term} is our most original contribution; there we show that the term $\int_0^L u_x(t,x)m(t,x)dx$ is a priori bounded uniformly in $t$.
Finally, in Section \ref{sec:full regularity} we use the previous estimates to prove higher regularity.

\subsection{Basic a priori estimates} \label{sec:basic a priori}
\begin{proposition}[Main a priori estimates] \label{main estimates}
Suppose $(u,m)$ is a pair of smooth functions satisfying \eqref{main system}.
Then
\begin{equation}
\label{positive integrable}
u(t,x) \geq 0, m(t,x) \geq 0, \|m(t)\|_{L^1(0,L)} \leq \|m_0\|_{L^1(0,L)} \ \forall t \in [0,T], \forall x \in [0,L].
\end{equation}
Moreover, for some $C > 0$ depending on the data,
we have
\begin{equation} \label{mu_x^2}
\int_0^T \int_0^L mu_x^2 dx dt \leq C,
\end{equation}
and
\begin{equation}
\label{mG^2}
\int_0^T \int_0^L m|G(t,u_x,[mu_x])|^2 dx dt = \int_0^T \int_0^L mH(t,u_x,[mu_x]) dx dt \leq C.
\end{equation}
\end{proposition}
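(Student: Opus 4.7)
The plan is to separate the sign and $L^1$ assertions, which follow from maximum principle arguments, from the two energy-type bounds, which are obtained jointly via an HJB/Fokker-Planck duality identity.

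For the first line, I would apply the parabolic maximum principle to each equation in turn. For $m$, equation \eqref{main system}(ii) with $m_0 \geq 0$, the Dirichlet condition $m(t,0) = 0$, and the no-flux Robin condition (v) at $x = L$ preserves non-negativity in the classical way. Integrating (ii) in $x$ on $(0,L)$, the boundary term at $x = L$ vanishes by (v) and the one at $x = 0$ reduces to $-\tfrac12\sigma^2 m_x(t,0) \leq 0$ (since $m \geq 0$ attains its minimum value $0$ there), giving $\|m(t)\|_{L^1} \leq \|m_0\|_{L^1}$. For $u$, observe that $H = \tfrac14(A - u_x)^2 \geq 0$ pointwise, so \eqref{main system}(i) gives $u_t + \tfrac12 \sigma^2 u_{xx} - ru \leq 0$; the minimum principle applied backwards in time from $u_T \geq 0$, together with $u(t,0) = 0$ and $u_x(t,L) = 0$, forces $u \geq 0$ on $\overline{Q_T}$.

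Next I would establish the energy identity by multiplying (i) by $m$, (ii) by $u$, adding, and integrating over $Q_T$. After integration by parts in $x$, all boundary terms at $x = 0$ vanish because $u(t,0) = m(t,0) = 0$, and those at $x = L$ collapse into $-u(t,L)\bigl[\tfrac12\sigma^2 m_x(t,L) + G\, m(t,L)\bigr]$, which is zero by (v). The resulting pointwise-in-$t$ identity reads
\begin{equation*}
\frac{d}{dt}\int_0^L um\, dx - r\int_0^L um\, dx + \int_0^L (Hm + u_x G m)\, dx = 0.
\end{equation*}
Using \eqref{hamilton-jacobi}--\eqref{boltzmann}, one computes $Hm + u_x G m = \tfrac14(A^2 - u_x^2) m$, where $A(t) := a(\eta(t)) + c(\eta(t))\bar p(t)$ depends only on $t$. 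Integrating from $0$ to $T$, discarding the non-negative terms $\int u(0)m_0$ and $r \int\!\!\int um$ on the left, and using $\int u_T m(T) \leq \|u_T\|_\infty \|m_0\|_{L^1}$ yields
\begin{equation*}
\tfrac14 \int_0^T\!\!\int_0^L u_x^2 m\, dx\, dt \leq C + \tfrac14 \int_0^T A(t)^2 \eta(t)\, dt.
\end{equation*}

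The hard step is closing this by controlling $\int_0^T A^2 \eta\, dt$ in terms of the very quantity we are trying to bound. The crucial algebraic simplification is that \eqref{average price}, combined with $a + c = 1$, collapses $A$ into the clean form $A = 1 - \epsilon \int_0^L G m\, dx$. Cauchy-Schwarz with $\|m\|_{L^1} \leq 1$ then yields $(A - 1)^2 \leq \epsilon^2 \int_0^L G^2 m\, dx = \epsilon^2 \int_0^L Hm\, dx$, and since $Hm = \tfrac14(A - u_x)^2 m \leq \tfrac12 A^2 m + \tfrac12 u_x^2 m$, combining the two inequalities gives a coupled algebraic relation in $\int_0^T A^2 \eta\, dt$ and $\int\!\!\int u_x^2 m\, dx\, dt$ that closes by Young's inequality and absorption, producing \eqref{mu_x^2}. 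The equality $\int m G^2 = \int m H$ in \eqref{mG^2} is then immediate from $H = G^2$, and the stated upper bound follows from the pointwise inequality $4 Hm \leq 2 A^2 m + 2 u_x^2 m$ combined with the bound on $\int_0^T A^2 \eta\, dt$ just obtained.
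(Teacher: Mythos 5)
Your treatment of \eqref{positive integrable} is sound, and in fact your $L^1$ argument (integrating (ii) in $x$ and using the no-flux condition at $x=L$ together with $m_x(t,0)\geq 0$) is a touch more direct than the paper's, which multiplies by $\phi'(m)$ for $\phi(s)=s^{1+\delta}$ and passes to the limit. Your derivation of the duality identity $\tfrac{d}{dt}\int um - r\int um + \int(H+u_xG)m = 0$ and the consequent bound $\int\!\!\int u_x^2 m \leq C + \int_0^T A^2\eta\,dt$ also matches the paper. The identity $A = 1 - \epsilon\int_0^L Gm\,dx$ that you single out is correct and genuinely pretty.

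The gap is in how you close the estimate on $\int_0^T A^2\eta\,dt$. After the step $(A-1)^2 \leq \epsilon^2\int G^2m$ followed by $\int Hm \leq \tfrac12 A^2\eta + \tfrac12\int u_x^2 m$, the absorption you describe produces, for $X(t) := \int_0^L u_x^2 m\,dx$,
\begin{equation*}
A^2\Bigl(1 - \tfrac{\epsilon^2\eta}{2}\Bigr) \;\leq\; 2A - 1 + \tfrac{\epsilon^2}{2}X,
\end{equation*}
which only yields a useful bound when $\epsilon^2\eta < 2$; and even then, feeding $\int_0^T X \leq C + \int_0^T A^2\eta$ back in requires the coefficient of $\int_0^T A^2\eta$ to stay strictly below $1$, which fails once $\epsilon$ is of order one. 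The Cauchy--Schwarz step $(\int Gm)^2 \leq \eta\int G^2m$ has discarded the crucial damping factor hidden in the original form $A = \tfrac{2}{2+\epsilon\eta} + \tfrac{\epsilon}{2+\epsilon\eta}\int u_x m$: the paper keeps this factor explicit, notes $\tfrac{\epsilon^2\eta^2}{(2+\epsilon\eta)^2} \leq \tfrac{\epsilon}{2+\epsilon}$ (which is $<1$ for \emph{every} $\epsilon>0$), and then chooses the Young exponent $\delta = 1/\epsilon$ so that the coefficient $(1+\delta)\tfrac{\epsilon}{2+\epsilon} = \tfrac{1+\epsilon}{2+\epsilon}$ is automatically below $1$. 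Your route, as written, imposes a smallness restriction on $\epsilon$ that the proposition does not have. (Incidentally, the identity $A = 1-\epsilon\int Gm$ \emph{can} be used to close the argument for all $\epsilon$, but by a different route: substitute it directly into the energy identity via $\int(H+u_xG)m = A\int Gm - \int G^2m = \tfrac{A-A^2}{\epsilon} - \int G^2m$, which gives $\int_0^T\!\!\int G^2m + \tfrac{1}{\epsilon}\int_0^T A^2 \leq \|u_T\|_\infty + \tfrac{1}{\epsilon}\int_0^T A$, and then a single Young inequality on $\tfrac{1}{\epsilon}A$ closes this with no restriction on $\epsilon$. That is not the manipulation you sketched, however.)
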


\begin{proof}
We start by proving \eqref{positive integrable}.
Let $\phi$ be any function in $C^2(\bb{R})$ with $\phi'(0) = 0$.
Multiply \eqref{main system}(ii) by $\phi'(m)$ and integrate by parts to get
\begin{equation}
\int_0^L \phi(m)(t,x) dx = -\frac{1}{2}\sigma^2 \int_0^t \int_0^L \phi''(m)m_x^2 dx ds + \int_0^t \int_0^L \phi''(m)m_x Gm_- dx ds.
\end{equation}
Take $\phi(s) = (s_-)^{2+\delta}$ where $s_- := (|s|-s)/2$ and let $\delta \to 0$ to deduce
\begin{multline}
\int_0^L m_-(t,x)^{2} dx = -\sigma^2 \int_0^t \int_0^L |(m_-)_x|^2 dx ds + 2\int_0^t \int_0^L (m_-)_x Gm_- dx ds
\\
\leq -\frac{1}{2}\sigma^2 \int_0^t \int_0^L |(m_-)_x|^2 dx ds + \frac{2\|G\|_\infty^2}{\sigma^2}\int_0^t \int_0^L (m_-)^2 dx ds,
\end{multline}
where we note that $(m_0)_- \equiv 0$.
By Gronwall's inequality we obtain $m_-(t,x) \equiv 0$, which proves positivity.
On the other hand, if we take $\phi(s) = s^{1+\delta}$ and let $\delta \to 0$, then we deduce $\|m(t)\|_{L^1(0,L)} \leq \|m_0\|_{L^1(0,L)}$.

Also, since we have
\begin{equation}
\label{hamilton jacobi inequality}
-u_t - \frac{\sigma^2}{2}u_{xx} + ru \geq 0,
\end{equation}
we can deduce using similar arguments that $u \geq e^{-rT} \min_x u(T,x)$.
Hence, in particular, we have $u \geq 0$ by the assumption $u_T \geq 0$.
Thus we have proved \eqref{positive integrable}.

Next we prove \eqref{mu_x^2}, from which follows \eqref{mG^2}.
Multiply \eqref{main system}(i) by $m$ and \eqref{main system}(ii) by $u$ and integrate by parts to get
\begin{multline}
\int_0^L u_T(x)m(T,x) ~dx 
-\int_0^L u(0,x)m_0(x) ~dx 
= r\int_0^T \int_0^L um ~dxdt 
\\
- \int_0^T\int_0^L mH(t,u_x,[mu_x]) ~dxdt 
- \int_0^T\int_0^L mu_xG(t,u_x,[mu_x]) ~dxdt.
\end{multline}
Since $u \geq 0$ and $m \geq 0$, we get
\begin{equation} 
\int_0^L u_T(x)m(T,x) ~dx  + \int_0^T\int_0^L m[u_xG(t,u_x,[mu_x])+H(t,u_x,[mu_x])] ~dxdt  \geq 0,
\end{equation}
and then using the fact that $\|m(t)\|_{L^1} \leq 1$ we can rewrite this as
\begin{equation}
\label{local to nonlocal estimate}
\int_0^T\int_0^L mu_x^2 ~dxdt 
\leq \int_0^T\int_0^L m(a+c\bar{p})^2 ~dxdt + 4\|u_T\|_\infty
= \int_0^T \eta(t)(a+c\bar{p})^2 dt + 4\|u_T\|_\infty.
\end{equation}
since $a + c\bar{p}$ does not depend on $x$.

To analyze the right-hand side, we observe that
\begin{equation*}
a(\eta(t)) + c(\eta(t))\bar{p}(t) 
=
\frac{2}{2+\epsilon \eta(t)} + \frac{\epsilon}{2+\epsilon\eta(t)} \int_0^L u_x(t,x)m(t,x)dx.
\end{equation*}
By Cauchy-Schwartz we see that
\begin{equation} \label{a + cp estimate}
|a(\eta(t)) + c(\eta(t))\bar{p}(t)| \leq 1 + \frac{\epsilon \eta^{1/2}(t)}{2+\epsilon\eta(t)} \left(\int_0^L u_x^2(t,x)m(t,x)dx\right)^{1/2}.
\end{equation}
which implies, using the fact that $0 \leq \eta(t) \leq 1$,
\begin{align*}
\eta(t)(a(\eta(t)) + c(\eta(t))\bar{p}(t))^2 
&\leq (1+1/\delta)\eta(t) + (1+\delta)\frac{\epsilon^2 \eta^2(t)}{(2+\epsilon\eta(t))^2}\int_0^L u_x^2(t,x)m(t,x)dx
\\
&\leq 
1+ 1/\delta + \frac{(1+\delta)\epsilon}{2+\epsilon}\int_0^L u_x^2(t,x)m(t,x)dx
\end{align*}
for an arbitrary $\delta > 0$.
By choosing $\delta = 1/\epsilon$,
then \eqref{local to nonlocal estimate} becomes
$$
\int_0^T\int_0^L mu_x^2 ~dxdt \leq (2+\epsilon)(1+\epsilon)T + 4(2+\epsilon)\|u_T\|_\infty
$$ 
which yields \eqref{mu_x^2}.
As for \eqref{mG^2}, we combine \eqref{mu_x^2} with \eqref{a + cp estimate} and the definition of $G$ and $H$.
\end{proof}

We may now deduce certain a priori bounds on the Fokker-Planck equation, which will be useful later on.
\begin{lemma}[Regularity of $m$] \label{regularity of m}
Suppose $(u,m)$ is a pair of smooth functions satisfying \eqref{main system}.

Then there exists a constant $C > 0$ depending on the data such that
\begin{equation} \label{m_x^2}
\int_0^T \int_0^L \frac{m_x^2}{m+1}dx dt \leq C.
\end{equation}
\end{lemma}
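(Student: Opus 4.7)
The plan is to carry out an entropy-type estimate for the Fokker--Planck equation \eqref{main system}(ii) using the test function $\log(m+1)$. This is the natural choice because differentiating $\log(m+1)$ produces exactly the factor $1/(m+1)$ appearing in the desired quantity, while at the same time vanishing at $x=0$ (where $m=0$), which will be important for handling boundary terms.

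First I would multiply \eqref{main system}(ii) by $\log(m+1)$ and integrate over $(0,L)$. Writing $F(m) := (m+1)\log(m+1)-m$, which satisfies $F'(m)=\log(m+1)$ and $F(m)\ge 0$ for $m\ge 0$, the time derivative term becomes $\frac{d}{dt}\int_0^L F(m)\,dx$. Two integrations by parts in $x$ yield the identity
\begin{equation*}
\frac{d}{dt}\int_0^L F(m)\,dx + \frac{1}{2}\sigma^2\int_0^L \frac{m_x^2}{m+1}\,dx + \int_0^L Gm\,\frac{m_x}{m+1}\,dx \;=\; \text{boundary terms}.
\end{equation*}

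The next step is to check that the boundary terms vanish. At $x=0$, condition \eqref{main system}(iv) gives $m(t,0)=0$, hence $\log(m(t,0)+1)=0$, killing both the $m_x\log(m+1)$ contribution from the second derivative and the $Gm\log(m+1)$ contribution from the transport term. At $x=L$, these two contributions combine into
\begin{equation*}
-\Bigl[\tfrac{1}{2}\sigma^2 m_x(t,L) + G(t,u_x(t,L),[mu_x])\,m(t,L)\Bigr]\log(m(t,L)+1),
\end{equation*}
which is exactly $0$ by the Neumann-type condition \eqref{main system}(v). So the boundary terms really do cancel, which is the one delicate bookkeeping step.

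Integrating in time from $0$ to $T$, using $F(m(T,\cdot))\ge 0$ and the smoothness of $m_0$ (so that $\int_0^L F(m_0)\,dx \le C$), I obtain
\begin{equation*}
\frac{1}{2}\sigma^2\int_0^T\!\!\int_0^L \frac{m_x^2}{m+1}\,dx\,dt \;\le\; C + \left|\int_0^T\!\!\int_0^L Gm\,\frac{m_x}{m+1}\,dx\,dt\right|.
\end{equation*}
The cross term is then controlled by writing $Gm\,m_x/(m+1) = (G\sqrt{m})\cdot(\sqrt{m}/\sqrt{m+1})\cdot(m_x/\sqrt{m+1})$, using $\sqrt{m}/\sqrt{m+1}\le 1$, and applying Cauchy--Schwarz followed by Young's inequality. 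This bounds the cross term by $\tfrac{1}{4}\sigma^2\int_0^T\!\!\int_0^L m_x^2/(m+1)\,dx\,dt + C\int_0^T\!\!\int_0^L mG^2\,dx\,dt$, where the last integral is finite thanks to \eqref{mG^2} from Proposition \ref{main estimates}. Absorbing the first piece into the left-hand side finishes the argument. The main obstacle is really just the careful boundary calculation; everything else is a standard entropy-dissipation argument tailored to the particular coupling in the model.
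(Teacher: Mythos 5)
Your proposal is correct and follows essentially the same argument as the paper: multiply the Fokker--Planck equation by $\log(m+1)$, recognize the entropy density $(m+1)\log(m+1)-m$, apply Young's inequality to absorb the cross term, and invoke the bound $\int_0^T\int_0^L mG^2\,dx\,dt \le C$ from Proposition~\ref{main estimates}. The only difference is that you spell out the boundary-term cancellation (at $x=0$ from $m=0$, at $x=L$ from condition \eqref{main system}(v)) explicitly, which the paper leaves implicit.
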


\begin{proof}
Multiply \eqref{main system}(ii) by $\ln(m+1)$ and integrate by parts to get
\begin{multline}
\frac{d}{dt} \int_0^L \phi(m)(t) \ dx
= -\frac{\sigma^2}{2}\int_0^L \frac{m_x^2}{1+m}dx
- \int_0^L \frac{G m m_x}{1+m}dx
\\
\leq -\frac{\sigma^2}{4}\int_0^L \frac{m_x^2}{1+m}dx
+ \frac{1}{\sigma^2}\int_0^L \frac{G^2 m^2}{1+m}dx,
\end{multline}
where $\phi(m) = (1+m)\ln(1+m) - m$.
By Equation \ref{mG^2} in Proposition \ref{main estimates}, and using the fact that $m_0$ is bounded and $\phi(m) \geq 0$, we get
\begin{equation}
\frac{\sigma^2}{4}\int_0^T \int_0^L \frac{m_x^2}{1+m}dx 
\leq \int_0^L \phi(m_0)\ dx
+ \frac{1}{\sigma^2}\int_0^T \int_0^L G^2 m \ dx dt 
\leq C.
\end{equation}
\end{proof}


\subsection{A priori bounds for the Hamilton-Jacobi equation} \label{sec:hamilton-jacobi}

Let $f(t) := a(\eta(t)) + c(\eta(t))\bar{p}(t)$.
Then \eqref{main system}(i) reads as
\begin{equation} \label{parabolic equation}
u_t + \frac{\sigma^2}{2}u_{xx} - ru + \frac{1}{4}(f(t) - u_x)^2 = 0,
\end{equation}
from which we can estimate
\begin{equation}
-u_t - \frac{\sigma^2}{2}u_{xx} + ru \leq \frac{1}{2}f(t)^2 + \frac{1}{2}u_x^2.
\end{equation}
From Proposition \ref{main estimates} we know that $f \in L^2(0,T)$ with an a priori bound on its norm.
Using classical arguments, this is enough to infer an $L^\infty$ estimate on $u$ as well as an $L^2$ estimate on $u_x$, as the following proposition makes clear.
\begin{lemma} \label{a priori bounds on u}
Suppose $u$ is a smooth function on $[0,T] \times [0,L]$ satisfying
\begin{equation} \label{subsolution g}
-u_t - ku_{xx} \leq g(t) + ju_x^2, ~~ u(T,x) = u_T
\end{equation}
where $j,k$ are positive constants, $g(t) \geq 0$ is an integrable function on $[0,T]$, and $u_T = u_T(x)$ is a smooth function on $[0,L]$.
Assume that $u$ is bounded below, that $u(t,0) = 0$, and $u_x(t,L) = 0$.
Then there exists a constant $C = C(j,k,u_T,\|g\|_{L^1(0,T)})$ such that
\begin{equation}
\|u\|_\infty + \int_0^T \int_0^L u_x^2 \ dx dt \leq C.
\end{equation}
\end{lemma}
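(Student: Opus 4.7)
The plan is to apply a Hopf--Cole transformation to linearize the quadratic gradient term, then obtain the $L^\infty$ bound via the maximum principle and the gradient bound via an energy estimate. Set $\alpha := j/k$ and $v := e^{\alpha u} - 1$. Using the identity $e^{\alpha u}u_{xx} = \partial_x(u_x e^{\alpha u}) - \alpha u_x^2 e^{\alpha u}$, multiplying \eqref{subsolution g} by $\alpha e^{\alpha u}$ converts it into the linear inequality
\[
-v_t - k v_{xx} \leq \alpha g(t)(v+1),
\]
with boundary and terminal data $v(t,0) = 0$, $v_x(t,L) = 0$, and $v(T,x) = e^{\alpha u_T(x)}-1$.

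First I would obtain the $L^\infty$ bound by comparison with the spatially constant supersolution $\bar y(t)$ defined by $-\bar y'(t) = \alpha g(t)(\bar y(t)+1)$ with $\bar y(T) = \|e^{\alpha u_T}-1\|_\infty$. Direct integration gives $\bar y(t)+1 = (1+\|e^{\alpha u_T}-1\|_\infty)\exp\bigl(\alpha \int_t^T g(s)\,ds\bigr)$, which is bounded by a constant depending only on $j,k,u_T,\|g\|_{L^1}$. The function $W := v - \bar y$ then satisfies $-W_t - kW_{xx} - \alpha g W \leq 0$ with $W(T,\cdot) \leq 0$, $W(t,0) = -\bar y(t) \leq 0$, and $W_x(t,L) = 0$. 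Multiplying by the integrating factor $\exp(-\alpha \int_t^T g)$ to eliminate the zeroth-order term and reversing time, the standard weak maximum principle forces $W \leq 0$, i.e.\ $v \leq \bar y$. This yields an upper bound on $u$; combined with the assumed lower bound, one obtains $\|u\|_\infty \leq C$.

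Next I would derive the gradient estimate. In the intended application $u \geq 0$ (by Proposition \ref{main estimates}), hence $v \geq 0$. Multiplying the $v$-inequality by $v$ and integrating over $Q_T$, the boundary flux $v v_x$ vanishes at both $x = 0$ (where $v = 0$) and $x = L$ (where $v_x = 0$), giving
\[
k \int_0^T \int_0^L v_x^2\,dx\,dt \leq \tfrac{1}{2}\int_0^L v(T,x)^2\,dx + \alpha \int_0^T g(t)\int_0^L v(v+1)\,dx\,dt \leq C
\]
upon invoking the $L^\infty$ bound already established. Since $v_x = \alpha e^{\alpha u} u_x$ and $e^{\alpha u}$ is bounded above and below by positive constants, this converts to $\int_0^T\int_0^L u_x^2\,dx\,dt \leq C$.

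The main obstacle is the sign hypothesis $v \geq 0$ used in the energy step, which reduces to $u \geq 0$. This is automatic in our intended application, but for the purely general hypothesis (only lower-boundedness of $u$) one would first shift $u$ by a constant to make it nonnegative, and then control the extra boundary flux produced at $x = 0$ by integrating \eqref{subsolution g} in $x$---notationally heavier but routine.
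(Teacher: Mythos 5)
Your argument is correct and follows essentially the same route as the paper's: a Hopf--Cole transformation with exponent $j/k$, a weak maximum principle for the $L^\infty$ bound, and an energy estimate for the gradient. The one genuine difference is how $g$ is handled. The paper absorbs it into the exponential, setting $w = \exp\bigl\{\lambda\bigl(u + \int_0^t g(s)\,ds\bigr)\bigr\} - 1$ with $\lambda = j/k$, so that the transformed inequality becomes $-w_t - kw_{xx} \leq 0$ with nothing on the right and the maximum principle is immediate; you keep $g$ as a zeroth-order term in $-v_t - kv_{xx} \leq \alpha g(v+1)$ and compare with the spatially constant ODE supersolution $\bar y$. Both variants work, and the conclusion is the same. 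Your choice has a small payoff in the energy step: since $v(t,0)=0$, the boundary flux $k\int_0^T v_x(t,0)\,v(t,0)\,dt$ vanishes exactly, whereas with the paper's $w$ one has $w(t,0)=e^{\lambda\int_0^t g}-1\neq 0$, leaving a boundary term that the paper drops without comment (its sign is favorable precisely when $u_x(t,0)\geq 0$, which holds when $u\geq 0$). You are also right to flag that testing the differential inequality with $v$ requires $v\geq 0$; the paper's version needs the analogous $w\geq 0$ for the same reason, and both requirements reduce to $u\geq 0$, which is guaranteed in the intended application by Proposition~\ref{main estimates}. So your caveat is not a defect of your proof relative to the paper's --- it is a shared implicit hypothesis that you have made explicit.
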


\begin{proof}
Let $w(t,x) = \exp\left\{\lambda\left(u(t,x) + \int_0^t g(s)ds\right)\right\} - 1$ for $\lambda = j/k$.
Then
\begin{equation} \label{heat subsolution}
-w_t - kw_{xx} \leq (j-k\lambda)\lambda e^{\lambda u}u_x^2 = 0.
\end{equation}
In particular $w$ satisfies the maximum principle, i.e.
$$
\max_{[0,T] \times [0,L]} w = \max_{\Gamma_T} w
$$
where $\Gamma_T = ([0,T] \times \{0\}) \cup (\{T\} \times [0,L])$.
To see this, it suffices to take $\mu = \max_{\Gamma_T} w$, then multiply \eqref{heat subsolution} by $(w-\mu)_+$ and integrate by parts (note that $w_x(t,L) = 0$) to get
$$
\int_0^L (w(t,x)-\mu)_+^2 dx = -k\int_0^T \int_0^L ((w-\mu)_+)_x^2 dx dt +  \int_0^L (w(T,x)-\mu)_+^2 dx \leq 0.
$$
It follows that $w \leq \mu$ everywhere,
from which we deduce that $u \leq \frac{1}{\lambda}\ln(\mu)$.
On the other hand, by the definition of $w$ we can directly compute
$$
\mu = \max_{x \in [0,L]}\exp\left\{\lambda\left(u_T(x) + \int_0^T g(t)dt\right)\right\} - 1,
$$
which is a constant depending only on $\|u_T\|_\infty$ and $\|g\|_{L^1(0,T)}$.

Using the same computation with $w$ instead of $(w-\mu)_+$ we get
$$
\int_0^T \int_0^L w_x^2 dx dt \leq \frac{1}{k}\int_0^L w(T,x)_+^2 dx \leq \frac{\mu^2 L}{k}.
$$
Since $w_x(t,x) = \lambda\exp\left\{\lambda\left(u(t,x) + \int_0^t g(s)ds\right)\right\}u_x(t,x)$ and $u$ is bounded below, we deduce
$$
\int_0^T \int_0^L u_x^2 \ dx dt  \leq \frac{\mu^2 L}{\lambda^2 k}e^{2\lambda\|u_-\|_\infty}.
$$
\end{proof}

\subsection{Analysis of nonlocal term} \label{sec:nonlocal term}

In order to obtain higher regularity on $u$, we need to analyze the nonlocal coupling term $\int_0^L u_x(t,x)m(t,x)dx$.
In particular, we will show it is bounded.

In the case when $\sigma = 0$, we have a fortuitous identity which follows from integration by parts.
Differentiate \eqref{hamilton-jacobi} to get
\begin{equation*}
u_{xt}  -ru_x - \frac{1}{2}(a+ c\bar{p} - u_x)u_{xx} = 0,
\end{equation*}
noting that $\sigma = 0$.
Then multiply by $m$ and \eqref{boltzmann} to get
\begin{equation*}
\frac{d}{dt} \int_0^L u_x(t,x)m(t,x)dx = r\int_0^L u_x(t,x)m(t,x)dx,
\end{equation*}
which means
\begin{equation*}
\int_0^L u_x(t,x)m(t,x)dx = e^{r(t-T)}\int_0^L u_x(T,x)m(T,x)dx.
\end{equation*}
Thus, as long as $u_T$ is smooth, we know that the term $\int_0^L u_x(t,x)m(t,x)dx$ is bounded uniformly in $t$.
This in turn implies that $\bar{p}(t)$ is bounded, which allows us to analyze the regularity of $u$ by classical methods for parabolic equations.

Unfortunately, when $\sigma > 0$ this formal calculation fails; we get instead
\begin{equation} \label{formal nonlocal identity}
e^{rt}\frac{d}{dt}e^{-rt} \int_0^L u_x(t,x)m(t,x)dx = - \frac{\sigma^2}{2}u_x(t,0)m_x(t,0) - \frac{\sigma^2}{2}u_{xx}(t,L)m(t,L)
\end{equation}
with no estimates on the boundary terms.
On the other hand, thanks to the following lemma, we note that each of the terms $u_x(t,0)m_x(t,0)$ and $u_{xx}(t,L)m(t,L)$ has a definite sign.
This will allow us to prove that each of these terms is integrable in time with an a priori bound on its $L^1(0,T)$ norm.
\begin{lemma} \label{u_x m_x sign}
Suppose $(u,m)$ is a pair of smooth functions satisfying \eqref{main system}.
Then $u_x(t,x) \geq 0$, $m_x(t,0) \geq 0$, and $u_{xx}(t,L) \leq 0$ for all $(t,x) \in [0,T] \times [0,L]$.
\end{lemma}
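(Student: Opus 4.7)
The plan is to reduce the three assertions to a single key claim, namely $u_x \geq 0$, from which the other two follow immediately by elementary boundary considerations. To prove $u_x \geq 0$, I differentiate the HJB equation \eqref{parabolic equation} in $x$; since $f(t) := a(\eta(t)) + c(\eta(t))\bar{p}(t)$ depends only on $t$, $v := u_x$ satisfies the backward linear uniformly parabolic equation
\[
v_t + \frac{\sigma^2}{2} v_{xx} - \frac{1}{2}(f(t) - v) v_x - rv = 0 \qquad \text{on } (0,T) \times (0,L),
\]
with terminal data $v(T,x) = u_T'(x) \geq 0$ (Assumption 4) and Dirichlet condition $v(t,L) = 0$ on the right boundary. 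No direct boundary condition on $v$ is available at $x = 0$; however, since $u(t,0) \equiv 0$ forces $u_t(t,0) \equiv 0$, evaluating \eqref{parabolic equation} at $x = 0$ yields the pointwise identity
\[
\frac{\sigma^2}{2} u_{xx}(t,0) = -\tfrac{1}{4}(f(t) - u_x(t,0))^2 \leq 0,
\]
which supplies the auxiliary inequality $v_x(t,0) \leq 0$ needed below.

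I would then apply a weak maximum principle together with Hopf's lemma to conclude $v \geq 0$. After reversing time via $w(s,x) := v(T-s,x)$, $w$ solves a forward uniformly parabolic equation with positive zero-order coefficient $r$; evaluating that equation at a putative interior negative minimum (where $w_x = 0$, $w_{xx} \geq 0$, and $w_s \leq 0$) yields $rw \geq 0$, ruling out negative minima in the interior or on the face $\{s = T\}$. Negative minima are also excluded on $\{s = 0\}$ (where $w = u_T' \geq 0$) and on $\{x = L\}$ (where $w = 0$). Hence the only remaining possibility is a negative minimum at some point $(s_0, 0)$, but Hopf's lemma applied there forces the inward normal derivative to be strictly positive, i.e.\ $w_x(s_0, 0) > 0$, contradicting the inequality $v_x(t,0) \leq 0$ established above. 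Therefore $v = u_x \geq 0$ throughout $\overline{Q_T}$.

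Granted $u_x \geq 0$, the remaining two conclusions are immediate. Since $m \geq 0$ by Proposition \ref{main estimates} and $m(t,0) = 0$, the map $x \mapsto m(t,x)$ attains its minimum at $x = 0$, giving $m_x(t,0) \geq 0$; since $u_x \geq 0$ with $u_x(t,L) = 0$, the map $x \mapsto u_x(t,x)$ attains its minimum at $x = L$, giving $u_{xx}(t,L) \leq 0$. The main obstacle is the maximum-principle step for $v$: because $v$ has no standard boundary condition at $x = 0$, one must combine Hopf's lemma with the pointwise bound $u_{xx}(t,0) \leq 0$ extracted directly from the HJB equation at the boundary. This hybrid boundary argument, rather than any computational estimate, is the delicate part of the proof.
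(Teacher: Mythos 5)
Your proof is correct, but it takes a more roundabout path than necessary because you overlooked a simpler boundary observation at $x = 0$. You claim that ``no direct boundary condition on $v = u_x$ is available at $x = 0$,'' but in fact one is: since $u \geq 0$ on $\overline{Q_T}$ (from Proposition~\ref{main estimates}) and $u(t,0) \equiv 0$, the function $x \mapsto u(t,x)$ attains its minimum at $x = 0$, so $u_x(t,0) \geq 0$ for every $t$. This is precisely the observation the paper uses, and it also supplies $m_x(t,0) \geq 0$ in the same stroke. With this, $v = u_x$ satisfies a linear uniformly parabolic equation with nonnegative data on the entire parabolic boundary $\Gamma_T$ (namely $v(t,0) \geq 0$, $v(t,L) = 0$, $v(T,\cdot) = u_T' \geq 0$), and the ordinary weak maximum principle for linear parabolic operators immediately gives $v \geq 0$ on $\overline{Q_T}$; there is no need to invoke Hopf's lemma or to rule out boundary minima by a normal-derivative argument.

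That said, your alternative argument does go through: the identity $\tfrac{\sigma^2}{2}u_{xx}(t,0) = -\tfrac{1}{4}\bigl(f(t) - u_x(t,0)\bigr)^2 \leq 0$, obtained by evaluating the HJB equation at $x = 0$ and using $u(t,0) \equiv 0$ (hence $u_t(t,0) = 0$), is a nice consequence of the structure of the equation, and the combination of the pointwise interior argument (ruling out negative interior minima via the sign of the zeroth-order coefficient $r$), the nonnegative data on $\{s=0\}$ and $\{x=L\}$, and a parabolic Hopf lemma at $\{x = 0\}$ against the sign of $v_x(t,0)$ is sound, provided one also notes that $w \not\equiv \mu$ (else $w(0,\cdot) = u_T' < 0$, a contradiction) so that the strict-minimum hypothesis of Hopf's lemma holds. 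The two remaining conclusions, $m_x(t,0) \geq 0$ and $u_{xx}(t,L) \leq 0$, you handle exactly as the paper does. In short: correct, but the asserted ``delicate hybrid boundary argument'' is avoidable, and recognizing that $u_x(t,0) \geq 0$ is already available from positivity of $u$ would streamline the proof considerably.
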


\begin{proof}
Notice that, by Proposition \ref{main estimates}, $u$ and $m$ are both non-negative, hence their minimum is attained at $u(t,0) = 0$ and $m(t,0) = 0$, respectively.
It follows that $u_x(t,0)$ and $m_x(t,0)$ are non-negative for all $t \in [0,T]$.

Differentiate \eqref{main system}(i) to get
\begin{equation} \label{differentiated hj}
u_{xt} + \frac{\sigma^2}{2}u_{xxx} -ru_x - \frac{1}{2}(a+ c\bar{p} - u_x)u_{xx} = 0.
\end{equation}
Note that $u_{x}(t,0)$ and $0 = u_x(t,L)$ are both non-negative.
As $u_T'(x)$ is also non-negative, it follows that $u_x \geq 0$ everywhere by a classical maximum principle argument.
Thus $u_x(t,L) = 0$ is a minimum of $u_x$, so $u_{xx}(t,L) \leq 0$.
\end{proof}

Returning to \eqref{formal nonlocal identity},
we note that the terms on the right-hand side have definite but opposite signs.
To show that each of them is integrable in time, we localize away from each boundary point.
Once this is accomplished, we can see that the term $\int_0^L u_x(t,x)m(t,x)dx$ is bounded.
We prove this in the following:
\begin{proposition}
\label{nonlocal term}
Suppose $(u,m)$ is a smooth solution of \eqref{main system}.

(i) For any $\delta > 0$, there exists a constant $C_\delta > 0$ such that
\begin{equation} \label{L-delta}
\left|\int_{0}^{L-\delta} u_x(t,x)m(t,x)dx \right| \leq C_{\delta} \ \ \forall t \in [0,T].
\end{equation}

(ii) For any $\delta > 0$, there exists a constant $C_\delta > 0$ such that
\begin{equation} \label{delta-L}
\left|\int_{\delta}^{L} u_x(t,x)m(t,x)dx \right| \leq C_{\delta} \ \ \forall t \in [0,T].
\end{equation}

(iii) By (i) and (ii), there exists a constant $C$ depending only on the data such that
\begin{equation} \label{nonlocal term estimate}
\left|\int_{0}^{L} u_x(t,x)m(t,x)dx \right| \leq C \ \ \forall t \in [0,T].
\end{equation}
\end{proposition}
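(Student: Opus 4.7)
The plan is to analyze the weighted functional $F_\phi(t) := \int_0^L \phi(x) u_x(t,x) m(t,x)\,dx$ for a carefully chosen $C^2$ cutoff $\phi:[0,L]\to\bb{R}$, exploiting the sign information from Lemma \ref{u_x m_x sign}. First I will derive a master identity by substituting $u_{xt}$ from \eqref{differentiated hj} and $m_t$ from \eqref{main system}(ii) into $\frac{d}{dt} F_\phi(t) = \int_0^L \phi(u_{xt} m + u_x m_t)\,dx$ and integrating by parts; the boundary conditions $u_x(t,L) = 0$ and $m(t,0) = 0$ kill the zeroth-order boundary contributions and yield
\begin{equation*}
\frac{d}{dt} F_\phi(t) = r F_\phi(t) - \frac{\sigma^2}{2}\phi(L) u_{xx}(t,L) m(t,L) - \frac{\sigma^2}{2}\phi(0) u_x(t,0) m_x(t,0) + R_\phi(t),
\end{equation*}
where $R_\phi(t)$ collects integrals supported on $\{\phi_x \neq 0\}$. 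After one further integration by parts to eliminate $u_{xx}$ from the remainder (again using $u_x(t,L) = 0$ and $m(t,0) = 0$), $R_\phi$ reduces to sums of the form $\int \phi_{xx} m u_x$, $\int \phi_x m_x u_x$, and $\int \phi_x G m u_x$, all of which are bounded in $L^1(0,T)$ by Cauchy--Schwarz applied to the a priori estimates of Proposition \ref{main estimates}, Lemma \ref{a priori bounds on u}, and Lemma \ref{regularity of m}.

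For part (i), I will pick $\phi \in C^2([0,L])$ with $\phi \equiv 1$ on $[0,L-\delta]$ and $\phi(L) = 0$. This kills the ``bad'' boundary term $u_{xx}(t,L) m(t,L)$, while the surviving term $-\tfrac{\sigma^2}{2} u_x(t,0) m_x(t,0)$ has the \emph{favorable} sign by Lemma \ref{u_x m_x sign}. Integrating $(e^{-rt} F_\phi)'(t)$ over $[0,T]$, and noting that $F_\phi(T) = \int_0^L \phi\, u_T'\, m(T,\cdot)\,dx$ and, after integration by parts, $F_\phi(0) = -\int_0^L u(0,x)(\phi m_0)'(x)\,dx$ are both controlled by the $L^\infty$ estimate on $u$ from Lemma \ref{a priori bounds on u}, I will obtain the auxiliary bound
\[
\int_0^T u_x(t,0) m_x(t,0)\,dt \leq C_\delta.
\]
Re-integrating the master identity from $t$ to $T$ and substituting this bound then gives $F_\phi(t) \leq C_\delta$ uniformly in $t$; since $u_x m \geq 0$ and $\phi \equiv 1$ on $[0,L-\delta]$, this proves \eqref{L-delta}.

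For part (ii), I will instead choose $\psi \in C^2([0,L])$ with $\psi \equiv 1$ on $[\delta,L]$ and $\psi(0) = 0$. Now the $x=0$ boundary term vanishes outright, and the $x=L$ boundary term $-\tfrac{\sigma^2}{2} u_{xx}(t,L) m(t,L)$ is itself non-negative by Lemma \ref{u_x m_x sign}; integrating $(e^{-rt} F_\psi)'(t)$ from $t$ to $T$ and simply dropping this favorable term yields $F_\psi(t) \leq C_\delta$ directly, without the intermediate step needed in (i). Part (iii) then follows immediately: for any fixed $\delta < L/2$, the non-negativity $u_x m \geq 0$ gives $\int_0^L u_x m\,dx \leq \int_0^{L-\delta} u_x m\,dx + \int_\delta^L u_x m\,dx \leq 2 C_\delta$.

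The main obstacle will be setting up the master identity cleanly in Step 1: because no a priori $L^2$ bound on $u_{xx}$ is yet available, the extra integration by parts that eliminates $u_{xx}$ from $R_\phi$ in favor of first-order derivatives of $u$ and $m$ is essential in order for the estimates of Sections \ref{sec:basic a priori} and \ref{sec:hamilton-jacobi} to close the bound on $R_\phi$. Once that bookkeeping is done, the two asymmetric cutoff choices isolate the two ``bad'' boundary terms one at a time and the Gronwall-type integration in $t$ is essentially automatic.
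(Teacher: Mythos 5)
Your proposal is correct and follows essentially the same route as the paper: the same cutoff $\zeta m$ multiplication of \eqref{differentiated hj}, integration by parts using \eqref{main system}(ii) and the boundary conditions $u_x(t,L)=0$, $m(t,0)=0$ to produce the master identity \eqref{cut-off}, the same $L^1(0,T)$ control of the remainder via Proposition \ref{main estimates}, Lemma \ref{a priori bounds on u} and Lemma \ref{regularity of m}, and the same use of Lemma \ref{u_x m_x sign} together with asymmetric cutoffs near $x=L$ and $x=0$. Your observation that in part (ii) the favorable sign of $u_{xx}(t,L)m(t,L)$ lets you drop the term and obtain the upper bound on $F_\psi(t)$ directly (while nonnegativity of $u_x m$ supplies the lower bound) is a small legitimate shortening of the paper's argument, which first records the intermediate $L^1(0,T)$ bound on $u_{xx}(\cdot,L)m(\cdot,L)$ before reintegrating; the substance is identical.
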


\begin{proof}
Take a smooth, non-negative function $\zeta = \zeta(x)$ on $[0,L]$ to be further specified later.
Multiply \eqref{differentiated hj} by $\zeta m$ and integrate by parts using \eqref{main system}(ii) to get
\begin{multline} \label{cut-off}
e^{rt} \frac{d}{dt}\left(e^{-rt}\int_0^L u_x(t)m(t)\zeta~ dx\right)
= -\frac{\sigma^2}{2}\zeta(L)u_{xx}(t,L)m(t,L) - \frac{\sigma^2}{2}\zeta(0)u_x(t,0)m_x(t,0)
\\
-\sigma^2\int_0^L \zeta_x u_x m_x \ dx
 - \frac{\sigma^2}{2}\int_0^L \zeta_{xx} u_x m \ dx 
- \frac{1}{2}(a+c\bar{p})\int_0^L \zeta_x u_x m \ dx
+ \frac{1}{2}\int_0^L \zeta_x u_x^2 m \ dx.
\end{multline}
Let us estimate the time integral of each of the terms in the second line of \eqref{cut-off}.
First we have
\begin{equation}
\int_0^T \left| \sigma^2\int_0^L \zeta_x u_x m_x \ dx \right| dt
\leq \sigma^2\|\zeta_x\|_\infty \int_0^T \int_0^L\left\{ u_x^2 (m+1) + \frac{m_x^2}{m+1}\right\} dx dt
\leq C\|\zeta_x\|_\infty.
\end{equation}
using Proposition \ref{main estimates} and Lemma \ref{a priori bounds on u}.
Likewise,
\begin{equation}
\int_0^T \left| \frac{\sigma^2}{2}\int_0^L \zeta_{xx} u_x m \ dx \ dx \right| dt
\leq \frac{\sigma^2}{2}\|\zeta_{xx}\|_\infty \int_0^T \int_0^L\left\{ (u_x^2 + 1)m\right\} dx dt
\leq C\|\zeta_{xx}\|_\infty,
\end{equation}
\begin{equation}
\int_0^T \left| (a+c\bar{p})\int_0^L \zeta_x u_x m \ dx \right| dt
\leq \|\zeta_x\|_\infty \int_0^T \left\{(a+c\bar{p})^2 + \int_0^L u_x^2 m \ dx\right\} dt
\leq C\|\zeta_x\|_\infty,
\end{equation}
and finally
\begin{equation}
\int_0^T \left| \int_0^L \zeta_x u_x^2 m \ dx \right| dt
\leq \|\zeta_x\|_\infty \int_0^T \int_0^L u_x^2 m \ dx dt
\leq C\|\zeta_x\|_\infty.
\end{equation}
To summarize, we may write \eqref{cut-off} as
\begin{equation} \label{cut-off1}
e^{rt} \frac{d}{dt}\left(e^{-rt}\int_0^L u_x(t)m(t)\zeta~ dx\right)
= -\frac{\sigma^2}{2}\zeta(L)u_{xx}(t,L)m(t,L)
 - \frac{\sigma^2}{2}\zeta(0)u_x(t,0)m_x(t,0)
+ I_\zeta(t),
\end{equation}
where
$$
\int_0^T |I_\zeta(t)|dt \leq C(\zeta)
$$
such that $C(\zeta)$ depends only on $\|\zeta_x\|_\infty,\|\zeta_{xx}\|_\infty$, and previous estimates.

Now let us prove (i).
We specify that $\zeta(x) = 0$ for $L - \delta/2 \leq x \leq L$, $\zeta(x) = 1$ for $0 \leq x \leq L - \delta$, and $0 \leq \zeta \leq 1$.
Then we can assume $C(\zeta) \leq C_\delta$, where $C_\delta$ is some constant proportional to $1/\delta^2$ for $\delta > 0$ small.
Integrating \eqref{cut-off1} over $[0,T]$ we get
\begin{multline}
\frac{\sigma^2}{2}\int_0^T e^{-rt} u_x(t,0)m_x(t,0)dt
\\
 = \int_0^L u_x(0,x)m_0(x)\zeta~ dx - e^{-rT} \int_0^L u_T'(x)m(T,x)\zeta~ dx
+ \int_0^T e^{-rt}I_\zeta(t)dt.
\end{multline}
Now on the one hand, using the fact that $u_x \geq 0$ and that $u$ is bounded (Lemma \ref{a priori bounds on u}), we have
\begin{equation}
\int_0^L |u_x(0,x)m_0(x)\zeta|~ dx = \int_0^L u_x(0,x)m_0(x)\zeta~ dx \leq \|m_0\|_\infty \int_0^L u_x(0,x) dx = \|m_0\|_\infty u(0,L) \leq C.
\end{equation}
On the other hand, since $\int_0^L m(t,x)dx \leq 1$ for all $t$, we have
\begin{equation}
\int_0^L |u_T'(x)m(T,x)\zeta|~ dx \leq \|u_T'\|_\infty.
\end{equation}
Using the fact that $u_x \geq 0$ and $m_x(t,0) \geq 0$ from Lemma \ref{u_x m_x sign}, we deduce that
\begin{equation}
\int_0^T |u_x(t,0)m_x(t,0)|dt 
\leq e^{rT}\int_0^T e^{rt} u_x(t,0)m_x(t,0)dt \leq C_\delta.
\end{equation}
Finally, integrating \eqref{cut-off1} this time over $[t,T]$ we get
\begin{multline}
\int_0^L u_x(t,x)m(t,x)\zeta~ dx 
\\
= \frac{\sigma^2}{2}\int_t^T e^{-rt} u_x(t,0)m_x(t,0)dt + e^{-rT} \int_0^L u_T'(x)m(T,x)\zeta~ dx - \int_t^T e^{-rt}I_\zeta(t)dt
\end{multline}
from which we obtain \eqref{L-delta}.

In a similar way we can prove (ii).
This time we specify that $\zeta(x) = 0$ for $0 \leq x \leq \delta/2$, $\zeta(x) = 1$ for $\delta \leq x \leq L$, and $0 \leq \zeta \leq 1$.
Again we can assume $C(\zeta) \leq C_\delta$, where $C_\delta$ is some constant proportional to $1/\delta^2$ for $\delta > 0$ small.
Integrating \eqref{cut-off1} over $[0,T]$ we get
\begin{multline}
-\frac{\sigma^2}{2}\int_0^T e^{-rt} u_{xx}(t,L)m(t,L)dt
\\
 = -\int_0^L u_x(0,x)m_0(x)\zeta~ dx + e^{-rT} \int_0^L u_T'(x)m(T,x)\zeta~ dx
- \int_0^T e^{-rt}I_\zeta(t)dt.
\end{multline}
Now since $u_{xx}(t,L) \leq 0$ from Lemma \ref{u_x m_x sign} and $m \geq 0$, we can deduce 
\begin{equation}
\int_0^T |u_{xx}(t,L)m(t,L)| dt \leq C_\delta
\end{equation}
using the same estimates as in the proof of part (i).
Now integrating \eqref{cut-off1} over $[t,T]$ we get
\begin{multline}
\int_0^L u_x(t,x)m(t,x)\zeta~ dx 
\\
= \frac{\sigma^2}{2}\int_t^T e^{-rt} u_{xx}(t,L)m(t,L)dt + e^{-rT} \int_0^L u_T'(x)m(T,x)\zeta~ dx - \int_t^T e^{-rt}I_\zeta(t)dt
\end{multline}
from which we infer \eqref{delta-L}.

Finally, \eqref{nonlocal term estimate} follows from \eqref{L-delta} and \eqref{delta-L} by fixing any $\delta < L/2$.
\end{proof}

\begin{corollary} \label{f(t) bounded}
Suppose $(u,m)$ is a smooth solution of \eqref{main system}.
Then for some constant $C$ depending only on the data,
\begin{equation} \label{a + cp bounded}
|a(\eta(t)) + c(\eta(t))\bar{p}(t)| \leq C \ \ \forall t \in [0,T].
\end{equation}
\end{corollary}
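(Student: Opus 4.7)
The plan is to simply combine the explicit formula for $a(\eta(t)) + c(\eta(t))\bar{p}(t)$ derived in the proof of Proposition \ref{main estimates} with the new a priori bound on the nonlocal term from Proposition \ref{nonlocal term}. Recall from the computation just after \eqref{local to nonlocal estimate} that
\begin{equation*}
a(\eta(t)) + c(\eta(t))\bar{p}(t) = \frac{2}{2+\epsilon\eta(t)} + \frac{\epsilon}{2+\epsilon\eta(t)}\int_0^L u_x(t,x)m(t,x)\,dx.
\end{equation*}

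First I would bound the first summand: since $\eta(t) \geq 0$, we have $0 \leq \tfrac{2}{2+\epsilon\eta(t)} \leq 1$, which is harmless. Next I would bound the prefactor in the second summand: again since $\eta(t) \geq 0$, $0 \leq \tfrac{\epsilon}{2+\epsilon\eta(t)} \leq \tfrac{\epsilon}{2}$. Finally, I would invoke Proposition \ref{nonlocal term}(iii), which supplies a constant $C$ depending only on the data such that $\left|\int_0^L u_x(t,x)m(t,x)\,dx\right| \leq C$ for every $t \in [0,T]$. Adding up these three pieces by the triangle inequality yields the desired bound \eqref{a + cp bounded}.

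There is no real obstacle here: all the work has already been done in Proposition \ref{nonlocal term}, and the corollary is just the observation that the quantity $f(t) := a(\eta(t)) + c(\eta(t))\bar{p}(t)$ is an affine function (with uniformly bounded coefficients) of the nonlocal integral that we just estimated. The role of this corollary is presumably to upgrade Lemma \ref{a priori bounds on u}: the source term $g(t) = \tfrac{1}{2}f(t)^2$ in \eqref{parabolic equation} is now bounded in $L^\infty(0,T)$ rather than merely in $L^1(0,T)$, opening the door to the higher regularity to be proved in Section \ref{sec:full regularity}.
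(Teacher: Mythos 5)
Your proof is correct and follows the same route as the paper: write out the explicit formula for $a(\eta(t)) + c(\eta(t))\bar{p}(t)$, observe that the coefficients are uniformly bounded since $\eta(t) \in [0,1]$, and apply the bound on the nonlocal integral from Proposition \ref{nonlocal term}(iii). The paper states this more tersely but the content is identical.
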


\begin{proof}
By the definition of $a,c,$ and $\bar{p}$, it is enough to have $|\int_0^T u_x(t,x)m(t,x)dx| \leq C$ for all $t \in [0,T]$, which we get from Proposition \ref{nonlocal term}.
\end{proof}

\subsection{Full regularity of $u$} \label{sec:full regularity}

Let us return to \eqref{parabolic equation}:
\begin{equation*} 
u_t + \frac{\sigma^2}{2}u_{xx} - ru + \frac{1}{4}(f(t) - u_x)^2 = 0,
\end{equation*}
where we recall $f(t) := a(\eta(t)) + c(\eta(t))\bar{p}(t)$.
We can now write
\begin{equation} \label{parabolic estimate}
-u_t - \frac{\sigma^2}{2}u_{xx} + ru \leq C_1 + \frac{1}{2}u_x^2,
\end{equation}
where $C_1$ is the constant coming from Corollary \ref{f(t) bounded}.
It is now possible to obtain global estimates on $|u_x|$, which we will then be able to ``bootstrap" to gain higher regularity on $u$.

\begin{lemma} \label{C^1 bounds}
For $(u,m)$ a smooth solution of \eqref{main system}, we have
\begin{equation} \label{u_x bounded}
|u_x(t,x)| \leq C \ \ \forall (t,x) \in [0,T] \times [0,L].
\end{equation}
for some constant $C$ depending only on the data.
\end{lemma}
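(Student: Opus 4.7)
Plan: Since $u_x \geq 0$ by Lemma~\ref{u_x m_x sign}, it suffices to bound $u_x$ from above. The strategy is to apply the maximum principle to $v := u_x$, which solves the parabolic equation obtained by differentiating \eqref{main system}(i), reducing the estimate to a boundary bound at $x=0$.

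Differentiating the HJB equation in $x$ yields, as in \eqref{differentiated hj},
\[
v_t + \frac{\sigma^2}{2} v_{xx} - r v - \frac{1}{2}(f(t) - v) v_x = 0,
\]
where $f(t) = a(\eta(t)) + c(\eta(t)) \bar{p}(t)$ is bounded by Corollary~\ref{f(t) bounded}. Treating $-\frac{1}{2}(f - v)$ as a (solution-dependent) first-order coefficient, this is a backward linear parabolic equation for $v$. At an interior point where $v$ attains a positive maximum, $v_t = 0$, $v_x = 0$, $v_{xx} \leq 0$, so the PDE forces $\frac{\sigma^2}{2} v_{xx} = r v > 0$, a contradiction. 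Hence the maximum of $v$ is attained on the backward parabolic boundary $\{T\} \times [0,L] \cup [0,T] \times \{0,L\}$. There, $v(T,x) = u_T'(x) \leq \|u_T'\|_\infty$ by the assumed regularity of $u_T$, and $v(t,L) = 0$ by the Neumann condition; only the bound on $v(t, 0) = u_x(t, 0)$ remains.

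To control $u_x(t,0)$, compare $u$ with an upper barrier $\phi$ vanishing at $x = 0$. A natural candidate is $\phi(x) = A(1 - e^{-\lambda x})$ for positive constants $A, \lambda$ to be chosen in terms of the data; here $\phi(0) = 0$ and $\phi'(0) = A\lambda$. A direct computation shows that the HJB supersolution inequality for $\phi$ reduces to the pointwise algebraic condition
\[
\frac{\sigma^2}{2} A \lambda^2 e^{-\lambda x} + r A(1 - e^{-\lambda x}) \geq \frac{1}{4}(f(t) - A\lambda e^{-\lambda x})^2,
\]
which can be arranged on $[0,T] \times [0, \delta]$ for an appropriate $\delta > 0$ using $|f| \leq C_1$. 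Requiring in addition that $A(1 - e^{-\lambda \delta}) \geq \|u\|_\infty$ (using the $L^\infty$ bound from Lemma~\ref{a priori bounds on u}) and $A\lambda \geq \|u_T'\|_\infty$ guarantees that $\phi \geq u$ on the remaining parabolic boundary of the strip. The standard comparison principle for semilinear parabolic equations convex in the gradient then yields $u(t, x) \leq \phi(x)$ on $[0, T] \times [0, \delta]$; since $u(t, 0) = \phi(0) = 0$, dividing by $x$ and letting $x \to 0^+$ gives $u_x(t, 0) \leq \phi'(0) = A\lambda$.

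Combining the two steps produces a uniform upper bound on $v = u_x$ depending only on the data; together with $u_x \geq 0$, this proves \eqref{u_x bounded}. The principal difficulty is the simultaneous satisfaction of the supersolution inequality and the boundary-dominance requirement for $\phi$: the former constrains $A$ in terms of $\sigma^2$ through the quadratic Hamiltonian, while the latter demands $A$ large compared with $\|u\|_\infty$. Reconciling these requires a careful scaling of $A$, $\lambda$, and $\delta$ in terms of the data, making use of the quadratic structure of the HJB nonlinearity.
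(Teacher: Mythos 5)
Your first step is sound and matches the paper's strategy: after invoking Lemma~\ref{u_x m_x sign} for the lower bound, you differentiate the HJB equation and use a maximum principle for $v=u_x$ to push the estimate to the parabolic half-boundary. (The paper does this by introducing $w=u_x e^{-rt}$, which removes the zeroth-order term and gives a textbook maximum principle; your direct interior-max argument carries the same content, although at a max with $t_0<T$ you only have $v_t\leq 0$, not $v_t=0$, which is all you need.) The reduction to bounding $u_x(t,0)$ is correct.

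The second step, however, has a genuine gap, and you have in fact already spotted where: the two requirements on $A$ you list at the end are not compatible. Writing $z=e^{-\lambda x}\in[e^{-\lambda\delta},1]$, your supersolution inequality at $x=0$ (i.e.\ $z=1$) reads
\[
\frac{1}{4}\bigl(f(t)-A\lambda\bigr)^2 \;\leq\; \frac{\sigma^2}{2}A\lambda^2 .
\]
Since $A\lambda$ must be large (you also need $A\lambda\geq\|u_T'\|_\infty$ and, more fundamentally, the whole purpose is to obtain a bound $u_x(t,0)\leq A\lambda$ that must be allowed to exceed $\|u_T'\|_\infty$), the left side behaves like $\frac14 A^2\lambda^2$, which forces $A\lesssim 2\sigma^2$. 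But boundary dominance at $x=\delta$ requires $A(1-e^{-\lambda\delta})\geq\|u\|_\infty$, and since $1-e^{-\lambda\delta}<1$ this forces $A>\|u\|_\infty$. When $\|u\|_\infty>2\sigma^2$ — which certainly can happen, because the $L^\infty$ bound from Lemma~\ref{a priori bounds on u} depends on $T$ and $u_T$ while the supersolution constraint on $A$ involves only $\sigma^2$ — no choice of $\lambda$ or $\delta$ reconciles the two. The difficulty is intrinsic to the quadratic Hamiltonian: any concave barrier $\phi$ with average slope $\geq\|u\|_\infty/\delta$ must have $\phi'$ at least that large somewhere, and then the term $\frac14(\phi')^2$ overwhelms the linear-in-$\phi'$ good term $-\frac{\sigma^2}{2}\phi''$. ``Careful scaling of $A,\lambda,\delta$'' does not fix this; the amplitude of your barrier is capped by $\approx 2\sigma^2$.

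The paper's route is precisely the fix: it applies a Hopf--Cole change of variable $v=e^{u/\sigma^2}-1$, which exactly absorbs the $\frac12 u_x^2$ piece of the Hamiltonian and yields the \emph{linear} differential inequality $-v_t-\frac{\sigma^2}{2}v_{xx}\leq\tilde C_1$. For a linear inequality the barrier $Me^{-x}$ is a supersolution for \emph{arbitrarily large} $M$, and one argues with $\tilde v=v+Me^{-x}$. Translated back, this is an implicit barrier $u\leq\sigma^2\ln\bigl(1+M(1-e^{-x})\bigr)$ whose height grows (logarithmically) without bound in $M$; that is exactly the feature your linear-amplitude barrier $A(1-e^{-\lambda x})$ lacks. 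To make your strategy work you would have to either adopt a barrier with this logarithmic structure, or (equivalently, and more cleanly) first transform the equation as the paper does.
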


\begin{proof}
By Lemma \ref{u_x m_x sign} we already know $u_x \geq 0$.
It suffices to prove that $u_x \leq C$.
We claim
\begin{equation}
u_x(t,0) \leq C.
\end{equation}
To see this, set $v = e^{u/\sigma^2} - 1$ and use \eqref{parabolic estimate} to get
\begin{equation}
-v_t - \frac{\sigma^2}{2}v_{xx} \leq C_1\frac{1}{\sigma^2}e^{\|u\|_\infty/\sigma^2} =: \tilde{C}_1.
\end{equation}
Note that $v(t,0) = v_x(t,L) = 0$.
Set $\tilde{v} = v + Me^{-x}$ where $M$ is large enough that
$$
v_x(T,x) = \frac{1}{\sigma^2}e^{u_T(x)/\sigma^2}u_T'(x) \leq Me^{-L}
$$
and
$$
\tilde{C}_1 \leq \frac{\sigma^2}{2}Me^{-L}.
$$
Then, on the one hand, we have $-\tilde{v}_t - \frac{\sigma^2}{2} \tilde{v}_{xx} \leq 0$, and since $\tilde{v}_x(t,L) = -Me^{-L} \leq 0$ we get as before the maximum principle
$$
\max_{[0,T] \times [0,L]} \tilde{v} \leq \max_{\Gamma_T} \tilde{v}.
$$
On the other hand, we have $\tilde{v}_x(T,x) \leq 0$ and so $\tilde{v}(T,x) \leq \tilde{v}(T,0) = M$ for all $x \in [0,L]$.
It follows that $\tilde{v}(t,0) = M$ is the global maximum of $\tilde{v}$, hence $\tilde{v}_x(t,0) \leq 0$ at each $t \in [0,T]$.
Recalling the definition of $\tilde{v}$ we get
$$
\frac{1}{\sigma^2}e^{u/\sigma^2}u_x(t,0) \leq M,
$$
and since $u \geq 0$ we get $u_x(t,0) \leq M\sigma^2$, which is the desired estimate.

Taking into account the assumption that $u_T$ is smooth, we have thus shown
\begin{equation}
\max_{\Gamma_T} |u_x| \leq C, \ \ \Gamma_T := ([0,T] \times \{0,L\}) \cup (\{T\} \times [0,L])
\end{equation}
where $C$ depends only on the data.

Now differentiate \eqref{parabolic equation} to get
\begin{equation} \label{u_x equation}
-u_{xt} - \frac{\sigma^2}{2}u_{xxx} + ru_x + \frac{1}{2}(f-u_x)u_{xx} = 0.
\end{equation}
Then $w(t,x) = u_x(t,x)e^{-rt}$ satisfies
\begin{equation}
-w_t - \frac{\sigma^2}{2}w_{xx} +  \frac{1}{2}(f-u_x)w_{x} = 0.
\end{equation}
By classical arguments, $w$ satisfies the maximum principle, i.e.
$$
|w(t,x)| \leq \max_{\Gamma_T} |w| \leq \max_{\Gamma_T} |u_x|,
$$
from which it follows that
$$
|u_x(t,x)| \leq e^{rT}\max_{\Gamma_T} |u_x| \leq C.
$$
\end{proof}

Corollary \ref{C^1 bounds} permits us to obtain higher order estimates for $u$.
\begin{proposition} \label{C^2 estimates}
There exists a constant $C$ depending only on the data such that if $(u,m)$ is a smooth solution of \eqref{main system}, then for some $\alpha > 0$
\begin{equation} \label{C^{1,2} bound}
\|u\|_{C^{1+\alpha/2,2+\alpha}(\overline{Q_T})} + \|m\|_{C^{1+\alpha/2,2+\alpha}(\overline{Q_T})} \leq C
\end{equation}
where $C^{1+\alpha/2,2+\alpha}(\overline{Q_T})$ is the parabolic H\"older space defined on $\overline{Q_T} = [0,T] \times [0,L]$.
\end{proposition}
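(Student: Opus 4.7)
The plan is to bootstrap from the bounds already established (boundedness of $u_x$ from Lemma \ref{C^1 bounds} and of $f$ from Corollary \ref{f(t) bounded}) to full classical regularity via a standard $L^p \Rightarrow$ Schauder loop for coupled parabolic equations.

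\textbf{Step 1 (Sobolev regularity for $u$).} Since $\|u_x\|_\infty \leq C$ and $\|f\|_\infty \leq C$, the source $\tfrac{1}{4}(f(t)-u_x)^2$ in \eqref{parabolic equation} is in $L^\infty(Q_T)$ with an a priori bound. Viewing \eqref{parabolic equation} as a linear parabolic equation with mixed Dirichlet-Neumann boundary conditions and smooth terminal data $u_T \in C^{2+\gamma}$, the Ladyzhenskaya--Solonnikov--Uraltseva $L^p$ theory gives $u \in W^{1,2}_p(Q_T)$ for every $p < \infty$. Morrey's embedding then yields $u_x \in C^{\alpha_0/2,\alpha_0}(\overline{Q_T})$ for some $\alpha_0 \in (0,1)$, with norm controlled by the data.

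\textbf{Step 2 (Hölder regularity for $m$).} The drift $G = \tfrac{1}{2}(f-u_x)$ is in $L^\infty(Q_T)$. Applying the De Giorgi--Nash--Moser theorem (in the version of Aronson-Serrin for divergence form parabolic equations with bounded drift), together with Hölder continuity of the initial datum $m_0$ and the Robin-type boundary condition \eqref{main system}(v), produces $m \in C^{\alpha_1/2,\alpha_1}(\overline{Q_T})$ for some $\alpha_1 > 0$.

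\textbf{Step 3 (Hölder regularity of $f$ in time).} Once $u_x$ and $m$ are both Hölder continuous, the map $t \mapsto \int_0^L u_x(t,x)m(t,x)\,dx$ is Hölder in $t$, and so is $\eta(t) = \int_0^L m(t,x)\,dx$. The explicit formulas for $a(\eta)$, $c(\eta)$ and \eqref{average price} then yield $f \in C^{\alpha/2}([0,T])$ for some $\alpha > 0$.

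\textbf{Step 4 (Schauder bootstrap).} With $f \in C^{\alpha/2}$ and $u_x \in C^{\alpha/2,\alpha}$, the RHS of \eqref{parabolic equation} lies in $C^{\alpha/2,\alpha}(\overline{Q_T})$. The classical parabolic Schauder estimate (e.g. Theorem IV.5.3 of Ladyzhenskaya--Solonnikov--Uraltseva) for mixed Dirichlet-Neumann problems delivers $\|u\|_{C^{1+\alpha/2,2+\alpha}(\overline{Q_T})} \leq C$. Then $G = \tfrac{1}{2}(f-u_x) \in C^{\alpha/2,\alpha}$, so expanding $(Gm)_x = G_x m + G m_x$ makes \eqref{main system}(ii) a linear parabolic equation for $m$ with Hölder coefficients, and the boundary condition \eqref{main system}(v) becomes a linear Robin condition with Hölder coefficient. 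One more application of Schauder estimates gives $\|m\|_{C^{1+\alpha/2,2+\alpha}(\overline{Q_T})} \leq C$.

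The main technical obstacle is verifying the compatibility of the source and boundary data at the corner points $(T,0)$ and $(T,L)$ so that Schauder estimates hold up to the whole parabolic boundary, rather than merely in the interior. The standing hypotheses $u_T(0) = u_T'(L) = 0$ and $m_0(0) = m_0(L) = m_0'(L) = 0$ are exactly the zeroth- and first-order compatibility conditions one needs; a minor check is required to confirm that the nonlinearities $H$ and the Robin coefficient in \eqref{main system}(v) respect these conditions at the corners, which follows from $u_x$ and $m$ already being continuous there.
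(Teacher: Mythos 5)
Your Steps 1 and 2 are sound and reach the same intermediate regularity as the paper ($u_x$ H\"older, $m$ bounded and H\"older), albeit via slightly different tools (De Giorgi--Nash--Moser/Aronson--Serrin for $m$ instead of LSU Theorem III.7.1 applied to the non-divergence form after first showing $u_{xx}$ H\"older). The genuine gap is at the junction of Steps 3 and 4: the H\"older bound on $f(t) = a(\eta(t)) + c(\eta(t))\bar{p}(t)$ that Step 3 delivers is too weak to support the Schauder estimate for $m$ in Step 4.

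The obstruction is the Robin condition \eqref{main system}(v) at $x = L$. Written as $m_x(t,L) + b(t)\,m(t,L) = 0$ with $b(t) = \tfrac{2}{\sigma^2}G(t,L) = \tfrac{1}{\sigma^2}f(t)$ (using $u_x(t,L)=0$), the boundary operator must have a coefficient in $C^{(1+\alpha)/2}([0,T])$ for LSU Theorem IV.5.3 to yield $m \in C^{1+\alpha/2,2+\alpha}(\overline{Q_T})$. Indeed, $m \in C^{1+\alpha/2,2+\alpha}$ forces $m_x(\cdot,L) \in C^{(1+\alpha)/2}([0,T])$; since $m(\cdot,L)$ is not identically zero for $t > 0$, this is only compatible with $b \in C^{(1+\alpha)/2}([0,T])$, i.e.~H\"older in $t$ with exponent strictly bigger than $1/2$. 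Your Step 3 produces $f$ H\"older in $t$ with exponent $\tfrac12\min(\alpha_0,\alpha_1) < \tfrac12$, bottlenecked by the De Giorgi--Nash--Moser exponent $\alpha_1$ for $m$, which is small and not at your disposal. Iterating does not help: the regularity of $b$ near $x = L$ that you need in order to improve $m$'s regularity is precisely what you are trying to deduce from $m$'s regularity, so the argument is circular.

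The paper closes this gap by a qualitatively different device. It first applies $L^p$ estimates to the equation \eqref{differentiated hj} satisfied by $u_x$ (whose leading and drift coefficients are already bounded) to obtain $u_{xx}$ H\"older, uses a barrier argument as in Lemma \ref{C^1 bounds} to bound $m_x(t,0)$, and then invokes the identity \eqref{formal nonlocal identity},
\begin{equation*}
e^{rt}\frac{d}{dt}\Bigl(e^{-rt}\int_0^L u_x m\,dx\Bigr) = -\frac{\sigma^2}{2}u_x(t,0)m_x(t,0) - \frac{\sigma^2}{2}u_{xx}(t,L)m(t,L),
\end{equation*}
whose right-hand side is now bounded in $L^\infty(0,T)$. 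This shows $t\mapsto \int_0^L u_x m\,dx$, and hence $f$, is \emph{Lipschitz} in time, giving $b\in C^{(1+\alpha)/2}([0,T])$ for every $\alpha < 1$ and making the Robin Schauder estimate for $m$ applicable. Your proof needs this Lipschitz bound on $f$ (or some other device breaking the circularity at the $x=L$ boundary) inserted before Step 4; the product-rule H\"older estimate of Step 3 does not suffice.
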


\begin{proof}
Observe that $u$ and $u_x$ each satisfy a linear parabolic equation with coefficients which are bounded by constants depending on the data.
By \cite[Theorem IV.9.1]{ladyzhenskaia1968linear}, we have that $u$ is bounded in $L^p(0,T;W^{2,p}(0,L)) \cap W^{1,p}(Q_T)$ for any $p > 1$ and thus $u,u_x$ and $u_{xx}$ are all bounded in a H\"older space $C^\beta(\overline{Q_T})$ for some $\beta > 0$.
Then by \cite[Theorem III.7.1]{ladyzhenskaia1968linear} $m$ has an a priori bound in $L^\infty(Q_T)$.
Further, we observe that \eqref{main system}(ii) can be written
\begin{equation}
m_t - \frac{\sigma^2}{2}m_{xx} - \frac{1}{2}(a+c\bar{p}-u_x)m_x + \frac{1}{2}u_{xx}m = 0,
\end{equation}
which also has coefficients bounded by the data.
Using the same technique as in Lemma \ref{C^1 bounds} we obtain an a priori estimate on $m_x(0,x)$.
Then \eqref{formal nonlocal identity} can be used to directly estimate the term $\int_0^L u_x(t,x)m(t,x)dx$ in $C^1([0,T])$.
We can now see that \eqref{main system}(i),(ii) are both parabolic equations with coefficients estimated in H\"older spaces by constants depending only on the data.
Applying \cite[Theorems IV.5.2 and IV.5.3]{ladyzhenskaia1968linear} now gives the conclusion.
\end{proof}

\section{Existence} \label{sec:existence}

We now prove the main result of this paper.
\begin{theorem}
There exists a classical solution of \eqref{main system}.
\end{theorem}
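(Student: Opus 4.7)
The plan is to apply the Leray--Schauder fixed point theorem to a map built from the nonlocal structure of the system. Since the nonlocal coupling enters only through the two scalar time-dependent quantities $\eta(t)=\int_0^L m\,dx$ and $\phi(t):=\int_0^L u_x m\,dx$, I would work in the Banach space $X=C([0,T])\times C([0,T])$ and construct a map $\Phi:X\to X$ as follows. Given $(\eta,\phi)\in X$, compute the effective forcing $f(t)=(2+\epsilon\phi(t))/(2+\epsilon\eta^+(t))$ (truncating $\eta$ below by $0$ to keep the denominator away from the singularity and agreeing with the algebraic identity derived in Proposition \ref{main estimates}), and then solve the semilinear HJB equation
\begin{equation*}
u_t+\tfrac12\sigma^2 u_{xx}-ru+\tfrac14\bigl(f(t)-u_x\bigr)^2=0
\end{equation*}
subject to the terminal and boundary conditions of \eqref{main system}. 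Classical quasilinear parabolic theory (Ladyzhenskaya--Solonnikov--Ural'tseva), together with the exponential transform $v=e^{u/\sigma^2}-1$ already used in Lemma \ref{a priori bounds on u} to tame the quadratic gradient growth, yields a unique classical solution $u\in C^{1+\alpha/2,2+\alpha}(\overline{Q_T})$. Next, solve the linear Fokker--Planck equation for $m$ with drift $G=(f-u_x)/2$; again standard linear parabolic theory gives a classical solution with the usual regularity. Finally, set $\Phi(\eta,\phi):=(\int_0^L m\,dx,\ \int_0^L u_x m\,dx)$. A fixed point of $\Phi$ is a classical solution of \eqref{main system}, since the formula for $f$ matches $a(\eta)+c(\eta)\bar p(t)$ whenever $\eta$ and $\phi$ really are $\int m$ and $\int u_x m$.

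Continuity of $\Phi$ follows from continuous dependence of the HJB and FP solutions on the coefficient $f$. Compactness of $\Phi$ follows from parabolic regularity applied to the solutions produced in the construction: $u$ and $m$ enjoy H\"older bounds in both time and space, so the scalar image $(\tilde\eta,\tilde\phi)$ is uniformly H\"older continuous in $t$ and therefore lives in a compact subset of $X$. The Leray--Schauder a priori condition amounts to showing that every $(\eta,\phi)\in X$ solving $(\eta,\phi)=\sigma\Phi(\eta,\phi)$ for some $\sigma\in[0,1]$ satisfies $\|(\eta,\phi)\|_X\le R$ for a constant $R$ independent of $\sigma$. Such a fixed point corresponds to a $\sigma$-deformed version of \eqref{main system} in which the nonlocal integrals inside $f$ are scaled by $\sigma$, but the HJB and FP equations themselves, their boundary conditions, and the sign data $u_T,u_T'\ge 0$, $m_0\ge 0$ are left untouched. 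Consequently, the full chain of estimates in Section \ref{sec: a priori estimate}---positivity of $u$, $m$, and $u_x$; the $L^1$ bound $\eta\le 1$; the energy estimate of Proposition \ref{main estimates}; the sign conditions of Lemma \ref{u_x m_x sign}; and the localized integration-by-parts argument of Proposition \ref{nonlocal term}---applies verbatim to the deformed system, producing the uniform bound $\|(\eta,\phi)\|_X\le C$. Leray--Schauder then yields a fixed point, and Proposition \ref{C^2 estimates} upgrades the resulting $(u,m)$ to a classical solution with $C^{1+\alpha/2,2+\alpha}$ regularity.

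The main obstacle is the verification that the Section \ref{sec: a priori estimate} estimates transfer uniformly to the $\sigma$-homotopy. The crux is that Proposition \ref{nonlocal term} depends on the sign structure of Lemma \ref{u_x m_x sign} and on the boundedness of $f$, and the latter is only established a posteriori (via Corollary \ref{f(t) bounded}) from the very bound on $\phi$ we are trying to prove---so one must check that the self-bootstrapping argument there is not broken by the $\sigma$-scaling. The scaling I chose is specifically designed to preserve all structural ingredients (sign conditions, the algebraic relation between $f$ and $\int u_xm$, and the energy identity coming from pairing equations (i) and (ii)) so that the proofs carry through with constants depending only on the data; this is why I parametrize by $(\eta,\phi)$ rather than by, say, $m$ or the full pair $(u,m)$.
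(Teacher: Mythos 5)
Your proposal is correct in outline but takes a genuinely different route from the paper. The paper applies Leray--Schauder to a map $T:(u,m)\mapsto(v,f)$ that freezes the \emph{entire} nonlinearity $(H,G)$ and solves the resulting \emph{linear} parabolic problems, working in the space of pairs $(u,m)$ with $u,u_x\in C^\delta(\overline{Q_T})$ and $m\in W^{1,\infty}(0,T;L^1(0,L))$, and the homotopy scales $H,G,m_0,u_T$ all by $\tau\in[0,1]$. You instead freeze only the two nonlocal scalars $(\eta,\phi)$, work in the much smaller space of scalar-valued time functions, solve the \emph{full nonlinear} HJB for each frozen forcing $f$, and deform by scaling the nonlocal data; as you correctly observe, the fixed-point equation $(\eta,\phi)=\sigma\Phi(\eta,\phi)$ turns \eqref{main system} into the same system with $\epsilon$ replaced by $\sigma\epsilon$, under which all of the Section \ref{sec: a priori estimate} estimates hold with constants uniform in $\sigma\in[0,1]$ (indeed they are monotone in $\epsilon$). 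Your route is cleaner in that the fixed-point space is essentially two-dimensional (over time) and the homotopy has a transparent structural meaning, at the cost of needing a separate classical existence result for the semilinear HJB with given $f$, which is standard but not free. One point you should tighten: with $X=C([0,T])^2$ the frozen forcing $f$ is merely continuous, so the LSU Schauder machinery (\cite[Theorems IV.5.2--5.3]{ladyzhenskaia1968linear}) does not directly give $u,m\in C^{1+\alpha/2,2+\alpha}$, which you invoke for compactness. Either work in $X=C^\beta([0,T])^2$ for a small $\beta>0$ (then $f$ is H\"older, Schauder applies, the image lies in $C^1([0,T])^2$, and $C^1\hookrightarrow C^\beta$ is compact), or else argue compactness of $t\mapsto(\int m,\int u_x m)$ via $W^{2,1}_p$ estimates, Sobolev embedding for $u_x$, and an Aubin--Lions type argument for $m$. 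Note also that the Robin coefficient in \eqref{main system}(v) reduces to $f(t)/2$ (since $u_x(t,L)=0$), whose regularity is inherited from $f$; this is precisely why the paper carries $m\in W^{1,\infty}(0,T;L^1)$ in its fixed-point space, and choosing $X=C^\beta$ is the cleanest analogue of that device in your setting. Finally, a cosmetic remark: your Leray--Schauder parameter collides notationally with the diffusion coefficient $\sigma$ of \eqref{main system}; rename it to avoid confusion.
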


\begin{proof}
We use the Leray-Schauder fixed point theorem.
Consider the map $(u,m) \mapsto (v,f) = T(u,m;\tau)$ given by solving the following parametrized set of PDE systems:
\begin{equation}
\label{parametrized system}
\left\{
\begin{array}{rcc}
(i) & v_t + \frac{1}{2}\sigma^2 v_{xx} - rv + \tau H(t,u_x,[mu_x]) = 0, & 0 < t < T, 0 < x < L\\
(ii) & f_t - \frac{1}{2}\sigma^2 f_{xx} - \tau \left(G(t,v_x,[mv_x])f\right)_x = 0, & 0 < t < T, 0 < x < L\\
(iii) & f(0,x) = \tau m_0(x), v(T,x) = \tau u_T(x), & 0 \leq x \leq L\\
(iv) & v(t,0) = f(t,0) = 0, ~~ v_x(t,L) = 0, & 0 \leq t \leq T \\ 
(v) & \frac{1}{2}\sigma^2 f_x(t,L) + \tau G(t,v_x(t,L),[mv_x])f(t,L) = 0, & 0 \leq t \leq T
\end{array}\right.
\end{equation}
for $\tau \in [0,1]$.
Let $X$ be the space of all $(u,m)$ such that $u$ and $u_x$ are both H\"older continuous, say in $C^{\delta}(\overline{Q_T})$,  and $m$ is in $W^{1,\infty}(0,T;L^1(0,L))$.
Then by inspecting the definitions of $G$ \eqref{boltzmann} and $H$ \eqref{hamilton-jacobi} we find that $H(t,u_x,[mu_x])$ and $G(t,u_x,[mu_x])$ are both H\"older continuous as well.
By \cite[Theorem IV.5.2]{ladyzhenskaia1968linear} there is a solution $v$ of \eqref{parametrized system}(i) satisfying (iii) and (iv) such that $v \in C^{1+\alpha/2,2+\alpha}(\overline{Q_T})$ for some $\alpha > 0$ and 
\begin{equation}
\|v\|_{C^{1+\alpha/2,2+\alpha}(\overline{Q_T})} \leq C\|(u,m)\|_{X} = C(\|u\|_{C^{\alpha/2,\alpha}(\overline{Q_T})} + \|u_x\|_{C^{\alpha/2,\alpha}(\overline{Q_T})} + \|m\|_{W^{1,\infty}(0,T;L^1(0,L))})
\end{equation}
for some generic constant $C$.
Next, we write \eqref{parametrized system}(ii) as
$$
f_t - \frac{1}{2}\sigma^2 f_{xx} - \tau G(t,v_x,[mv_x])f_x + \frac{\tau}{2}v_{xx} f = 0,
$$
and we note that the coefficients are H\"older continuous.
Furthermore, because $m \in W^{1,\infty}(0,T;L^1(0,L))$ we can see that
$$
G(t,v_x(t,L),[mv_x]) = \frac{2}{2+\epsilon \eta(t)} + \frac{\epsilon}{2+\epsilon \eta(t)}\int_0^L v_x(t,y)m(t,y)dy
$$
is independent of $x$ and has a bounded time derivative.
Then we can apply \cite[Theorem IV.5.3]{ladyzhenskaia1968linear} to get a solution $f \in C^{1+\alpha/2,2+\alpha}(\overline{Q_T})$ satisfying \eqref{parametrized system}(iii) and (v) such that 
\begin{equation}
\|f\|_{C^{1+\alpha/2,2+\alpha}(\overline{Q_T})} \leq C(\|v\|_{C^{1+\alpha/2,2+\alpha}(\overline{Q_T})} + \|m\|_{W^{1,\infty}(0,T;L^1(0,L))} \leq C\|(u,m)\|_{X}.
\end{equation}
It follows that $T : X \times [0,1] \to X$ is a well-defined and compact mapping, since $C^{1+\alpha/2,2+\alpha}(\overline{Q_T}) \times C^{1+\alpha/2,2+\alpha}(\overline{Q_T})$ is compact in $X$ by the Arzel\`a-Ascoli Theorem.

Now for $\tau = 0$ we have $T(u,m;0) = 0$ for all $(u,m)$ by standard theory for the linear heat equation.
On the other hand, if $(u,m) \in X, \tau \in [0,1]$ is such that $(u,m) = T(u,m;\tau)$ then $(u,m) \in C^{1+\alpha/2,2+\alpha}(\overline{Q_T}) \times C^{1+\alpha/2,2+\alpha}(\overline{Q_T})$ is a solution of \eqref{main system} with $m_0,u_T$, $G$ and $H$ replaced by $\tau m_0, \tau u_T,$ $\tau G$ and $\tau H$, respectively.
Then the a priori estimates of Section \ref{sec: a priori estimate} carry through uniformly in $\tau \in [0,1]$, and so by Proposition \ref{C^2 estimates} we obtain a constant $C_0$ depending only on the data such that
$$
\|(u,m)\|_{X} \leq C_0.
$$
By the Leray-Schauder fixed point theorem (see e.g.~\cite[Theorem 10.6]{gilbarg2015elliptic}), there exists $(u,m)$ such that $(u,m) = T((u,m);1)$, which is a classical solution to \eqref{main system}.
\end{proof}

\section{Uniqueness} \label{sec:uniqueness}

The structure of \eqref{main system} makes uniqueness a nontrivial issue.
Unlike traditional mean field games in which uniqueness is verified by a straightforward use of the ``energy identity" (thanks to the fact that the coupling is monotone \cite{lasry07}),
System \eqref{main system} does not allow such an argument. 
Our uniqueness result will rely heavily on the fact that solutions are {\em smooth} with a priori bounds, and it will hold only when $\epsilon$ is small.

We now proceed to state and prove the main uniqueness result.
\begin{theorem}
There exists $\epsilon_0 > 0$ sufficiently small such that for any $\epsilon \leq \epsilon_0$, \eqref{main system} has at most one classical solution.
\end{theorem}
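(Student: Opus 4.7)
The strategy hinges on the fact that at $\epsilon = 0$ one has $a(\eta) \equiv 1$ and $c(\eta) \equiv 0$, so both $H$ and $G$ become independent of $m$ and the system completely decouples; uniqueness is then classical for each equation separately. For small $\epsilon$ the coupling through $\bar p(t)$ has coefficient of size $O(\epsilon)$, and the plan is to absorb this by a contraction argument on the scalar quantity $f(t) := a(\eta(t)) + c(\eta(t))\bar p(t)$.

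Let $(u_1,m_1), (u_2,m_2)$ be two classical solutions, and set $\bar u = u_1 - u_2$, $\bar m = m_1 - m_2$, $f_i = a(\eta_i) + c(\eta_i) \bar p_i$, $\bar f = f_1 - f_2$. All a priori bounds from Section \ref{sec: a priori estimate} are uniform over $\epsilon \in [0,\epsilon_0]$ by inspection. The first step is to use the explicit formula \eqref{average price} together with Proposition \ref{nonlocal term} and Lemma \ref{C^1 bounds} to establish
\begin{equation*}
\|\bar f\|_{L^\infty(0,T)} \le C\epsilon \bigl(\|\bar u_x\|_{L^\infty(Q_T)} + \|\bar m\|_{L^\infty(0,T; L^1(0,L))}\bigr),
\end{equation*}
where the factor $\epsilon$ arises because $c(\eta), a(\eta)-1$ and $\partial_\eta a(\eta)$ are each $O(\epsilon)$ uniformly on $[0,1]$.

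The second step is to subtract the two systems. Using $G_i = \tfrac{1}{2}(f_i - (u_i)_x)$ and $H_i = G_i^2$, the differences satisfy the linear parabolic system
\begin{align*}
\bar u_t + \tfrac{\sigma^2}{2}\bar u_{xx} - r\bar u - \tfrac{1}{2}(G_1 + G_2)\bar u_x &= -\tfrac{1}{2}(G_1+G_2)\bar f, \\
\bar m_t - \tfrac{\sigma^2}{2}\bar m_{xx} - (G_1 \bar m)_x &= \tfrac{1}{2}\bigl((\bar u_x - \bar f) m_2\bigr)_x,
\end{align*}
with zero terminal/initial data, $\bar u(t,0) = \bar m(t,0) = 0$, $\bar u_x(t,L) = 0$, and a nonhomogeneous Robin-type condition $\tfrac{1}{2}\sigma^2 \bar m_x(t,L) + G_1(t,L)\bar m(t,L) = -\tfrac{1}{2}\bar f(t)\, m_2(t,L)$ at $x=L$. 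The coefficients $G_i, m_2$ and their relevant derivatives are uniformly H\"older-bounded by Proposition \ref{C^2 estimates}. Classical linear parabolic theory (the maximum principle via exponential substitution of Lemma \ref{C^1 bounds} applied to $\bar u$ and then to the differentiated equation for $\bar u_x$, together with an $L^p$ Calder\'on-Zygmund argument to pass from $L^\infty$ to $C^1$ in space) then yields
\begin{equation*}
\|\bar u_x\|_{L^\infty(Q_T)} \le M\|\bar f\|_{L^\infty(0,T)}, \qquad \|\bar m\|_{L^\infty(0,T;L^1)} \le M\bigl(\|\bar u_x\|_{L^\infty(Q_T)} + \|\bar f\|_{L^\infty(0,T)}\bigr),
\end{equation*}
for a constant $M$ depending only on the data.

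Chaining these inequalities with Step 1 gives $\|\bar f\|_{L^\infty} \le C M(1+M)\epsilon \|\bar f\|_{L^\infty}$, so choosing $\epsilon_0 < 1/(CM(1+M))$ forces $\bar f \equiv 0$. The difference system then becomes homogeneous with zero data, and uniqueness for standard linear parabolic equations with H\"older coefficients yields $\bar u \equiv 0$ and $\bar m \equiv 0$. The main obstacle I expect is the $L^1$-type estimate for $\bar m$: the right-hand side is in divergence form, $(m_2)_x$ is only H\"older bounded, and the Robin boundary condition at $x=L$ has a nonzero right-hand side proportional to $\bar f$. The cleanest route is a duality argument, testing the $\bar m$-equation against the classical solution $\phi$ of the backward dual problem with forcing $\mathrm{sgn}(\bar m)$ and Neumann-type boundary data at $x=L$; uniform H\"older bounds on $G_1$ and $m_2$ will then give $\phi \in W^{1,\infty}$ with a constant independent of $\epsilon$, allowing one to integrate by parts and close the estimate.
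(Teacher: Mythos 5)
Your proposal is correct and is genuinely different from the paper's. The paper follows the standard mean--field--game pattern: multiply the HJB difference by $m_1-m_2$, the FP difference by $u_1-u_2$, add, and exploit what remains of coercivity. Because the coupling here is not monotone, this yields a quadratic term $\int e^{-rt}(m_1+m_2)(\partial_x u_1-\partial_x u_2)^2$ plus error terms $I_1,I_2$ that are controlled, using $|\partial_x u_i|\le C_0$, by quantities with coefficients vanishing as $\epsilon\to 0$; a separate $L^2$ energy estimate on the FP difference then feeds into a Gronwall argument. Your argument instead isolates the single scalar quantity $f(t)=a(\eta)+c(\eta)\bar p$ through which all coupling passes, observes that the map from $(u_x,m)$ to $f$ is $O(\epsilon)$-Lipschitz because $c$, $a-1$ and $\partial_\eta a$ are all $O(\epsilon)$, and closes with two linear stability estimates plus a contraction. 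A few remarks on the comparison: (a) your Step~2 estimate for $\bar m$ can actually be obtained more cheaply than by duality --- an $L^2$ energy estimate (multiply the $\bar m$-equation by $\bar m$) closes directly, because the Robin boundary contribution at $x=L$ cancels against the boundary term of the divergence-form right-hand side; this also gives $L^\infty_t L^2_x$ control, which dominates $L^\infty_t L^1_x$ on a bounded interval; (b) the $\|\bar u_x\|_{L^\infty}\le M\|\bar f\|_{L^\infty}$ step is cleanest via $W^{2,p}_{\mathrm{par}}$ estimates (Theorem IV.9.1 of \cite{ladyzhenskaia1968linear}) with large $p$ plus parabolic Sobolev embedding, using the H\"older coefficient bounds from Proposition \ref{C^2 estimates}; a Schauder estimate would instead require a H\"older modulus on $\bar f$ that is not available a priori, so the $L^p$ route you sketch is the right one; (c) minor typo: the source term in your $\bar m$-equation should read $\tfrac12\bigl((\bar f-\bar u_x)m_2\bigr)_x$ (sign flipped), which is immaterial for the estimates. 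Both proofs rely crucially on the uniform-in-$\epsilon$ a priori bounds of Section \ref{sec: a priori estimate}; the paper's route is ``structural'' and perhaps more likely to generalize to settings where the full $C^{2+\alpha}$ a priori regularity is unavailable, whereas yours is more elementary and makes the role of the $O(\epsilon)$ coupling strength completely transparent.
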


\begin{proof}
Let $(u_1,m_1)$ and $(u_2,m_2)$ be two solutions.
Define, for $i=1,2$,
$$
H_i = \frac{1}{4}(a(\eta_i(t)) + c(\eta_i(t))\bar{p}_i(t) - \partial_x u_i(t,x))^2, 
\ \
G_i = \frac{1}{2}(a(\eta_i(t)) + c(\eta_i(t))\bar{p}_i(t) - \partial_x u_i(t,x))
$$
where $\eta_i(t)$ and $\bar{p}_i(t)$ are defined according to the definitions in \eqref{average} and \eqref{average price}, with $u$ and $m$ replaced by $u_i$ and $m_i$.
Then, in particular, $u= u_1 - u_2$ satisfies
\begin{equation}
\label{hj differences}
u_t + \frac{\sigma^2}{2} u_{xx} - ru + H_1 - H_2 = 0, \ \ u(T,\cdot) \equiv 0
\end{equation}
while $m = m_1 - m_2$ satisfies
\begin{equation}
\label{fp differences}
m_t - \frac{\sigma^2}{2} m_{xx} - (G_1m_1 - G_2m_2)_x = 0, \ \ m(0,\cdot) \equiv 0.
\end{equation}
Let us introduce some notation.
Observe that
$$
a(\eta_i(t)) + c(\eta_i)(t))\bar{p}_i(t) = \frac{2}{2+\epsilon\eta_i(t)} + \frac{\epsilon}{2+\epsilon\eta_i(t)}\int_0^L \partial_x u_i(t,x)m_i(t,x)dx.
$$
With this in mind, we define
$$
A_i(t) = \frac{2}{2+\epsilon\eta_i(t)},
\ \ \
B_i(t) = \frac{\epsilon}{2+\epsilon\eta_i(t)}\int_0^L \partial_x u_i(t,x)m_i(t,x)dx.
$$
Notice that $2G_i = A_i + B_i - \partial_x u_i$ and $H_i = G_i^2$.

Now multiply \eqref{hj differences} by $m_1-m_2$ and \eqref{fp differences} by $u_1-u_2$, integrate by parts and add to get the typical energy identity for mean field games:
\begin{equation}
\label{energy difference 1}
\int_0^T \int_0^L e^{-rt}[(H_1-H_2)(m_1-m_2) + (G_1m_1-G_2m_2)(\partial_x u_1 - \partial_x u_2)] dx dt = 0,
\end{equation}
which can be rearranged to get
\begin{multline}
\label{energy difference 2}
\int_0^T \int_0^L e^{-rt}m_1[H_1-H_2 + (\partial_x u_1 - \partial_x u_2)G_1]dx dt
\\
+ \int_0^T \int_0^L e^{-rt}m_2[H_2-H_1 + (\partial_x u_2 - \partial_x u_1)G_2]dx dt
= 0.
\end{multline}
This, in turn, can be rearranged to give
\begin{multline}
\label{energy difference 3}
-\int_0^T e^{-rt}\left(A_1(t)^2-A_2(t)^2\right)(\eta_1(t)-\eta_2(t))dt
\\
+ \int_0^T \int_0^L e^{-rt} (m_1+m_2)(\partial_x u_1 - \partial_x u_2)^2 dx dt
= I_1 + I_2
\end{multline}
where
\begin{equation}
I_1 := \int_0^T e^{-rt}(\eta_1(t)-\eta_2(t))\{B_1(t)^2 - B_2(t)^2
+ 2A_1(t)B_1(t) - 2A_2(t)B_2(t) \}dt
\end{equation}
and
\begin{equation}
I_2 := 2\int_0^T \int_0^L e^{-rt} (m_2\partial_x u_1 - m_1\partial_x u_2)(A_1(t)+B_1(t)-A_2(t)-B_2(t)) dx dt.
\end{equation}
By a simple computation we have
\begin{equation}
-\int_0^T e^{-rt}\left(A_1(t)^2-A_2(t)^2\right)(\eta_1(t)-\eta_2(t))dt
\geq \frac{8\epsilon}{(1+\epsilon)^3}\int_0^T e^{-rt} (\eta_1(t)-\eta_2(t))^2 dt
\end{equation}
so we can write
\begin{equation} \label{uniqueness1}
8\epsilon\int_0^T e^{-rt} (\eta_1(t)-\eta_2(t))^2 dt + \int_0^T \int_0^L e^{-rt} (m_1+m_2)(\partial_x u_1 - \partial_x u_2)^2 dx dt
\leq I_1 + I_2.
\end{equation}
Our main task is to estimate $I_1$ and $I_2$.
For this we will use the a priori estimates from Section \ref{sec: a priori estimate}, which say in particular that $|\partial_x u_i| \leq C_0$ for some $C_0$ depending only on the data and on $\epsilon_0$, where $\epsilon \leq \epsilon_0$.
Since we are going to make $\epsilon$ small, we need to keep in mind that the constant $C_0$ does not change for decreasing values of $\epsilon$.

First, we estimate $|B_1(t)-B_2(t)|$.
We have
\begin{multline}
B_1(t) - B_2(t) =
\\
\frac{\epsilon}{2+\epsilon\eta_1(t)}\int_0^L \partial_x u_1(t,x)m_1(t,x)dx
- \frac{\epsilon}{2+\epsilon\eta_2(t)}\int_0^L \partial_x u_2(t,x)m_2(t,x)dx
\\
= \left(\frac{\epsilon}{2+\epsilon\eta_1(t)} - \frac{\epsilon}{2+\epsilon\eta_2(t)}\right)\int_0^L \partial_x u_1(t,x)m_1(t,x)dx
\\
+ \frac{\epsilon}{2(2+\epsilon\eta_2(t))}\int_0^L \partial_x (u_1(t,x)-u_2(t,x))(m_1(t,x)+m_2(t,x))dx
\\
+
\frac{\epsilon}{2(2+\epsilon\eta_2(t))}\int_0^L (\partial_x u_1(t,x) + \partial_x u_2(t,x))(m_1(t,x) - m_2(t,x))dx.
\end{multline}
Observe that
\begin{align*}
\left|\frac{1}{2+\epsilon\eta_1(t)} - \frac{1}{2+\epsilon\eta_2(t)}\right|
&\leq 
\frac{\epsilon|\eta_1(t)-\eta_2(t)|}{4},
\\
\int_0^L|m_1(t,x)-m_2(t,x)| dx
&\leq L^{1/2}\left(\int_0^L |m_1(t,x)-m_2(t,x)|^2dx\right)^{1/2},  \ \ \text{and}
\\
\int_0^L (\partial_x u_1 + \partial_x u_2)(m_1 - m_2)dx,
&\leq \sqrt{2}\left(\int_0^L (\partial_x u_1 + \partial_x u_2)^2(m_1 - m_2)dx\right)^{1/2}
\end{align*}
by the Cauchy-Schwartz inequality.
So using the uniform pointwise bounds on $u_x(t,x)$,
we obtain
\begin{multline} \label{B_1-B_2}
|B_1(t) - B_2(t)|
\leq \frac{\epsilon}{2\sqrt{2}}\left(\int_0^L (\partial_x u_1 - \partial_x u_2)^2(m_1+m_2) \ dx\right)^{1/2}
\\
+ \frac{\epsilon^2}{4}|\eta_1(t)-\eta_2(t)|
+ \frac{\epsilon}{4}C_0 L^{1/2}\left(\int_0^L |m_1-m_2|^2dx\right)^{1/2}.
\end{multline}
On the other hand, we have $|B_i(t)| \leq C_0 \epsilon/2$ for $i=1,2$.
We deduce from \eqref{B_1-B_2} that
\begin{multline} \label{B^2}
|B_1(t)^2 - B_2(t)^2| \leq \frac{\epsilon^2}{2\sqrt{2}} C_0\left(\int_0^L (\partial_x u_1 - \partial_x u_2)^2(m_1+m_2) \ dx\right)^{1/2}
\\
+ \frac{\epsilon^3}{4}C_0|\eta_1(t)-\eta_2(t)|
+ \frac{\epsilon^2}{4}C_0^2 L^{1/2}\left(\int_0^L |m_1-m_2|^2dx\right)^{1/2}.
\end{multline}
We also have
\begin{equation} \label{A_1-A_2}
|A_1(t)-A_2(t)| \leq \frac{\epsilon}{2}|\eta_2(t)-\eta_1(t)|.
\end{equation}
as well as $|A_i(t)| \leq 1$ for $i=1,2$.
Combine this with \eqref{B_1-B_2} to get
\begin{multline} \label{AB}
|A_1(t)B_1(t)-A_2(t)B_2(t)| \leq \frac{\epsilon}{2\sqrt{2}}\left(\int_0^L (\partial_x u_1 - \partial_x u_2)^2(m_1+m_2) \ dx\right)^{1/2}
\\
+ \frac{\epsilon^2}{4}(1+C_0)|\eta_1(t)-\eta_2(t)|
+ \frac{\epsilon}{4}C_0 L^{1/2}\left(\int_0^L |m_1-m_2|^2dx\right)^{1/2}.
\end{multline}
From \eqref{B^2} and \eqref{AB} it follows that
\begin{multline} \label{I_1}
I_1 \leq 
 \frac{\epsilon^2 C_0 + \epsilon}{2\sqrt{2}}\int_0^T e^{-rt}(\eta_1(t)-\eta_2(t))\left(\int_0^L (\partial_x u_1 - \partial_x u_2)^2(m_1+m_2) \ dx\right)^{1/2} dt
\\
+ \frac{\epsilon^3 C_0 + \epsilon^2(1+C_0)}{4}\int_0^T e^{-rt}(\eta_1(t)-\eta_2(t))^2 dt
+
\\
\frac{\epsilon^2C_0^2 + \epsilon C_0}{4}L^{1/2}\int_0^T e^{-rt}(\eta_1(t)-\eta_2(t))\left(\int_0^L |m_1-m_2|^2dx\right)^{1/2}dt
\\
\leq P_1(\epsilon)\int_0^T  \int_0^L e^{-rt}(m_1+m_2)(\partial_x u_1 - \partial_x u_2)^2 dx dt
+ P_2(\epsilon)\int_0^T \int_0^L e^{-rt}(m_2-m_1)^2 dx dt
\end{multline}
where $P_1(\epsilon),P_2(\epsilon) \to 0$ as $\epsilon \to 0$.
As for $I_2$, setting $D(t) = A_1(t)+B_1(t)-A_2(t)-B_2(t)$, we write
\begin{multline}
I_2 = \int_0^T \int_0^L e^{-rt}D(t) (m_2-m_1)(\partial_x u_1 + \partial_x u_2) dx dt
\\
+ \int_0^T \int_0^L e^{-rt}D(t) (m_1+m_2)(\partial_x u_1 - \partial_x u_2) dx dt
\\
\leq
2C_0L^{1/2}\int_0^T e^{-rt}|D(t)| \left(\int_0^L (m_2-m_1)^2 dx\right)^{1/2} dt
\\
+ \sqrt{2}\int_0^T  e^{-rt}|D(t)| \left(\int_0^L (m_1+m_2)(\partial_x u_1 - \partial_x u_2)^2\right)^{1/2} dx dt.
\end{multline}
By \eqref{B_1-B_2} and \eqref{A_1-A_2} we have
\begin{equation} \label{I_2}
I_2 
\leq
P_3(\epsilon)\int_0^T \int_0^L e^{-rt}(m_2-m_1)^2 dx dt
+ P_4(\epsilon)\int_0^T  \int_0^L e^{-rt}(m_1+m_2)(\partial_x u_1 - \partial_x u_2)^2 dx dt
\end{equation}
where $P_3(\epsilon),P_4(\epsilon) \to 0$ as $\epsilon \to 0$.
By \eqref{I_1} and \eqref{I_2}, Equation \eqref{uniqueness1} becomes
\begin{equation}
\label{uniqueness2}
\int_0^T  \int_0^L e^{-rt}(m_1+m_2)(\partial_x u_1 - \partial_x u_2)^2 dx dt
\leq \frac{P_2(\epsilon) + P_3(\epsilon)}{1-P_1(\epsilon)-P_4(\epsilon)}\int_0^T \int_0^L e^{-rt}(m_2-m_1)^2 dx dt.
\end{equation}

Now we consider the Fokker-Planck equation \eqref{fp differences}.
Recall that $m = m_1-m_2$.
Multiply by $m$ and integrate by parts to get
\begin{multline}
\frac{1}{2}\int_0^L m^2(t,x)dx = -\frac{\sigma^2}{2}\int_0^t \int_0^L m_x^2 \ dx dt
- \int_0^t \int_0^L m_x(G_1m_1-G_2m_2) dx dt
\\
\leq -\frac{\sigma^2}{4}\int_0^t \int_0^L m_x^2 \ dx dt
+ \frac{1}{\sigma^2}\int_0^t \int_0^L (G_1m_1-G_2m_2)^2 dx dt
\\
\leq \frac{2}{\sigma^2}\int_0^t \int_0^L G_1^2(m_1-m_2)^2 dx dt
+ \frac{2}{\sigma^2}\int_0^t \int_0^L |G_1-G_2|^2 m_2^2 \ dx dt.
\end{multline}
Recall that $G_i = \frac{1}{2}(A_i+B_i - \partial_x u_i)$.
Then, using \eqref{B_1-B_2},\eqref{A_1-A_2}, and the fact that $G_1$ and $m_2$ are bounded by some constant depending on the data, we obtain
\begin{equation}
\int_0^L (m_1(t,x)-m_2(t,x))^2dx
\leq C\int_0^t \int_0^L (m_1-m_2)^2 dx dt
+ P_5(\epsilon)\int_0^t \int_0^L (\partial_x u_1-\partial_x u_2)^2 m_2 dx dt
\end{equation}
where $P_5(\epsilon) \to 0$ as $\epsilon \to 0$.
By Gronwall's Lemma, we have
\begin{equation}
\sup_{t \in [0,T]} \int_0^L (m_1(t,x)-m_2(t,x))^2 dx \leq e^{CT}P_5(\epsilon)\int_0^T \int_0^L (\partial_x u_1-\partial_x u_2)^2 m_2 dx dt,
\end{equation}
and then by appealing to \eqref{uniqueness2} we have
\begin{multline} \label{uniqueness3}
\sup_{t \in [0,T]} \int_0^L (m_1(t,x)-m_2(t,x))^2 dx \leq \frac{e^{CT}P_5(\epsilon)(P_2(\epsilon) + P_3(\epsilon))}{1-P_1(\epsilon)-P_4(\epsilon)}\int_0^T \int_0^L e^{-rt}(m_2-m_1)^2 dx dt
\\
\leq \frac{T e^{CT}P_5(\epsilon)(P_2(\epsilon) + P_3(\epsilon))}{1-P_1(\epsilon)-P_4(\epsilon)}\sup_{t \in [0,T]} \int_0^L (m_1(t,x)-m_2(t,x))^2 dx.
\end{multline}
Fix $\epsilon$ small enough;
then \eqref{uniqueness3} implies
\begin{equation}
\sup_{t \in [0,T]} \int_0^L (m_1(t,x)-m_2(t,x))^2 dx = 0,
\end{equation}
i.e.~$m_1 = m_2$.
Returning to \eqref{uniqueness2} we also have
\begin{equation}
\int_0^T  \int_0^L (m_1+m_2)(\partial_x u_1 - \partial_x u_2)^2 dx dt = 0,
\end{equation}
and so appealing to \eqref{B_1-B_2} and \eqref{A_1-A_2} we have $A_1 = A_2$ and $B_1 = B_2$.
Then it is straightforward to show that $u_1=u_2$ by multiplying \eqref{hj differences} by $u = u_1-u_2$ and integrating by parts.
Noting that $u_x$ has an a priori bound, we have
\begin{equation}
\frac{1}{2}\int_0^L u^2(t,x)dx + \frac{\sigma^2}{4}\int_t^T\int_0^L u_x^2 dx dt
\leq C\int_t^T\int_0^L u^2 dx dt,
\end{equation}
and we conclude that $u = 0$ by Gronwall's Lemma.
\end{proof}

\bibliographystyle{siam}
\bibliography{C:/mybib/mybib}
\end{document}